\DeclareMathOperator{\od}{\rm d}
\DeclareMathOperator{\opi}{\pi}
\numberwithin{theorem}{section}
\numberwithin{lemma}{section}
\numberwithin{equation}{section}
\numberwithin{remark}{section}
\begin{document}

\title{Nodal auxiliary space preconditioning for the surface de Rham complex}

\author{Yuwen Li
}


\institute{Yuwen Li(\Letter) \\
yuwenli925@gmail.com \at \newline
          Department of Mathematics, The Pennsylvania State University, \\University Park, PA 16802, USA \\   
          \newline
\emph{Current address}: School of Mathematical Sciences, Zhejiang University,\\ Hangzhou, Zhejiang 310058, China \\
\newline
Communicated by Doug Arnold 
}

\date{Received: date / Accepted: date}

\maketitle

\begin{abstract}
  This work develops optimal preconditioners for the discrete H(curl) and H(div) problems on two-dimensional surfaces by nodal auxiliary space preconditioning [R.~Hiptmair, J.~Xu: SIAM J.~Numer.~Anal.~\textbf{45}, 2483-2509 (2007)]. In particular, on unstructured triangulated surfaces, we develop fast and user-friendly preconditioners for the edge and face element discretizations of  curl-curl and grad-div problems based on inverting several discrete surface Laplacians. The proposed preconditioners lead to efficient iterative methods for computing harmonic tangential vector fields on discrete surfaces. Numerical experiments on two- and three-dimensional hypersurfaces are presented to test the performance of those surface preconditioners.
\keywords{surface de Rham complex  \and  Hiptmair--Xu preconditioner \and  multigrid \and Hodge--Laplace equation \and harmonic vector field}
\subclass{65N30 \and 65N55  \and 65F08}
\end{abstract}

\section{Introduction}
Discretizations of partial differential equations (PDEs) typically yield sparse algebraic systems of linear equations with a huge number of unknowns. In order to achieve reasonable efficiency, those  large-scale discrete linear systems should be solved by fast linear solvers. In theory and practice, multilevel iterative solvers such as the geometric multigrid (cf.~\cite{Brandt1977,BankDupont1981,Hackbusch1985,Xu1992}) and algebraic multigrid (AMG) (cf.~\cite{BrandtMcCormickRuge1985,RugeStuben1987,VanekBrezinaMandel2001,BankSmith2002,XuZikatanov2017}) are the most efficient linear solvers for discretized PDEs on unstructured grids. Moreover, the convergence speed and robustness of these multilevel solvers could be improved when they are used in Krylov subspace methods, e.g., the preconditioned conjugate gradient (PCG) method, as preconditioners. On Euclidean domains, we refer to \cite{Hackbusch1985,BramblePasciakWangXu1991,XuZikatanov2002,XuZikatanov2017} for the classical theory of multilevel methods.

In recent decades, numerical methods for solving PDEs on surfaces has been a popular and important research area, see \cite{DeckelnickDziukElliott2005,DziukElliott2013} and references therein for an introduction. To efficiently implement numerical PDE schemes on surfaces, fast surface linear solvers are indispensable. We refer to e.g., \cite{Holst2001,AksoyluKhodakovskySchroder2005,KornhuberYserentant2008,BonitoPasciak2012,Li2021SISC} for specific fast Poisson-type solvers on surfaces. Besides nodal discretizations of elliptic PDEs, there have been many works devoted to numerical analysis of saddle-point systems of PDEs on surfaces, see e.g., \cite{HolstStern2012,CockburnDemlow2016,BonitoDemlowLicht2020} for the surface mixed Hodge Laplacian, mixed elliptic, and Stokes equations. Those numerical PDEs are built upon discrete divergence and curl and utilize the edge and face finite elements on discrete surfaces. Due to large kernels of curl and divergence operators, the algebraic systems resulting from discretized PDEs involving curl or div could not be efficiently solved by the standard AMG.

To the best of our knowledge, optimal iterative solvers for surface PDEs discretized by edge and face finite elements are still missing in the literature. In contrast, on Euclidean domains, with solid theoretical foundation, the discrete H(curl) and H(div)  systems could be efficiently solved by geometric multigrids \cite{Hiptmair1997,Hiptmair1999SINUM,ArnoldFalkWinther2000,Zikatanov2008,ChenNochettoXu2009,VassilevskiWang1992}, as well as Krylov subspace methods preconditioned by the popular Hiptmair--Xu (HX) preconditioner \cite{HiptmairXu2007}. The HX framework preconditions the inverse of discrete curl-curl and grad-div   elliptic operators using the inverse of several nodal element discrete Laplacians, which could be further approximated by well-established fast Poisson solvers. Without using a grid hierarchy, HX preconditioners are user-friendly and important building blocks of complex systems in real-world numerical simulations (cf.~\cite{Xu2010}).

For the edge element discretization of curl-curl problems and face element discretization of grad-div problems on surfaces, we develop optimal preconditioners by generalizing nodal auxiliary space HX preconditioning  \cite{HiptmairXu2007}. The theoretical analysis is based on a discrete stable decomposition of edge and face finite element spaces on surfaces. For example, the edge element space on a two-dimensional surface could be stably split as the sum of a high-frequency space and four surface nodal element spaces with the help of simple auxiliary transfer operators.  As a consequence, the corresponding curl-curl preconditioner makes use of the inverse of four discrete surface Laplacians on the 2-d surface. In contrast, the classical HX preconditioner on flat domains utilizes only three discrete inverse planar Laplacians in $\mathbb{R}^2$. Replacing the surface Laplacian with AMG cycles or parallel AMG preconditioners, the surface HX preconditioners could be independent of grid hierarchy and be easily implemented on any reasonable triangulated surfaces.  

In finite element exterior calculus, the Hodge--Laplace equation is an important model problem under extensive investigation 
  in recent years (cf.~\cite{ArnoldFalkWinther2006,ArnoldFalkWinther2010,HolstStern2012,Demlow2017,Li2019SINUM,HongLiXu2021}).
When solving the discrete Hodge Laplacian on domains with nontrivial topology, it is crucial to capture its kernel, the space of discrete harmonic forms or harmonic vector fields. In addition, the discrete harmonic space has many applications in computational geometry, electromagnetism and computer graphics, see, e.g., \cite{HiptmairOstrowski2002,FisherSchroderDesbrunHoppe2007,XuZhangCOXiong2009,RodriguezBertolazziGhiloniValli2013}. As far as we know, optimal solvers for computing harmonic fields have not been rigorously investigated in the existing literature. 
Our surface HX preconditioner yields a new optimal iterative method for computing 
harmonic tangential vector fields on discrete surfaces. In particular, a minimum residual (MINRES) method (cf.~\cite{ChoiPaigeSaunders2011,PaigeSaunders1975}) is used to find a basis of the kernel of the surface Hodge Laplacian in mixed form. With the help of a block diagonal surface HX preconditioner, the convergence speed of the MINRES iteration is shown to be uniform with respect to the grid size. 

\subsection{Notation}
In the rest of this section, we introduce the notation for abstract operator preconditioning.
For a Hilbert space $V$, let $(\bullet,\bullet)_V$ denote its inner product, $\|\bullet\|_V$ the $V$-norm, $V^\prime$ the dual space of $V,$ $[V]^\ell$ the Cartesian product of $\ell$ copies of $V,$ and $\langle\bullet,\bullet\rangle=\langle\bullet,\bullet\rangle_{V'\times V}$ the action of $V'$ on $V$. 
Given a linear operator $g: V_1\rightarrow V_2$, let $R(g)$ denote its range, $N(g)$ the kernel of $g$, and $g^\prime: V_2^\prime\rightarrow V_1^\prime$ the adjoint of $g$, i.e.,
\begin{equation*}
        \langle g^\prime r,v\rangle=\langle r,gv\rangle,\quad\forall v\in V_1,~\forall r\in V_2^\prime.
\end{equation*}
For a bounded linear operator $A: V\rightarrow V^\prime$, we say it is symmetric and positive-definite (SPD) provided $\forall v\in V$, $\langle Av,v\rangle\geq0$,  $\langle Av,v\rangle=0\Longrightarrow v=0$, and
\begin{align*}
        \langle Av_1,v_2\rangle=\langle Av_2,v_1\rangle,\quad\forall v_1, v_2\in V.
\end{align*} 
Similarly, we say a bounded linear operator $B: V^\prime\rightarrow V$ is SPD provided
$\forall r\in V^\prime$, $\langle r,Br\rangle\geq0$, $\langle r,Br\rangle=0\Longrightarrow r=0$, and
\begin{align*}
        \langle r_1,Br_2\rangle=\langle r_2,Br_1\rangle,\quad\forall r_1, r_2\in V^\prime.
\end{align*}
The SPD operators $A$ and $B$ define  inner products $V$ and $V^\prime$ by 
\begin{align*}
    &(v_1,v_2)_A:=\langle A v_1,v_2\rangle,\quad\forall v_1,~ v_2\in V,\\
    &(r_1,r_2)_B:=\langle r_1,Br_2\rangle,\quad\forall r_1,~ r_2\in V^\prime.
\end{align*}
Let $\|\bullet\|_A$ denote the norm in $V$ corresponding to $(\bullet,\bullet)_A$, and $\|\bullet\|_B$ the norm in $V^\prime$ associated with  $(\bullet,\bullet)_B$. Let  $$\kappa(BA):=\|BA\|_{V\rightarrow V}\|(BA)^{-1}\|_{V\rightarrow V}=\lambda_{\max}(BA)/\lambda_{\min}(BA)$$ be the operator conditioner number, where $\lambda_{\max}(BA)>0$, $\lambda_{\min}(BA)>0$ are the maximum and minimum eigenvalues of $BA$, respectively.
The following fictitious space lemma \cite{Nepomnyaschikh1992} is useful for estimating the condition number and thus developing uniform preconditioners, see, e.g., \cite{Xu1996,HiptmairXu2007}.
\begin{lemma}[Fictitious space lemma]\label{FSP}
Let $V$, $\bar{V}$ be Hilbert spaces and $A: V\rightarrow V^\prime$, $\bar{A}: \bar{V}\rightarrow\bar{V}^\prime$ be SPD operators.
Assume $\pi: \bar{V}\rightarrow V$ is a surjective linear operator, and 
\begin{itemize}
\item There exists a constant $c_1>0$ such that  $\|\pi\bar{v}\|_A\leq c_1\|\bar{v}\|_{\bar{A}}$ for each $\bar{v}\in\bar{V};$
    \item 
There exists a constant $c_2>0$ such that given any $v\in V,$ some $\bar{v}\in\bar{V}$ satisfies 
$$\pi\bar{v}=v,\quad\|\bar{v}\|_{\bar{A}}\leq c_2\|v\|_A.$$
\end{itemize}
Then for $B:=\pi\bar{A}^{-1}\pi^\prime: V^\prime\rightarrow V$ we have
\begin{equation*}
\begin{aligned}
c_1^{-2}\langle r,A^{-1}r\rangle&\leq\langle r,Br\rangle\leq c_2^2\langle r,A^{-1}r\rangle,\quad\forall r\in V^\prime,\\
    \kappa(BA)&\leq \left(c_1c_2\right)^2.
\end{aligned}
\end{equation*}
\end{lemma}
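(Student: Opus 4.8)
The plan is to prove the fictitious space lemma by analyzing the operator $B = \pi\bar{A}^{-1}\pi^\prime$ directly through its associated bilinear form. First I would verify that $B$ is SPD: symmetry follows from $\bar{A}^{-1}$ being SPD together with $(\pi^\prime)^\prime = \pi$, positivity follows by writing $\langle r, Br\rangle = \langle r, \pi\bar{A}^{-1}\pi^\prime r\rangle = \langle \pi^\prime r, \bar{A}^{-1}\pi^\prime r\rangle = \|\bar{A}^{-1}\pi^\prime r\|_{\bar{A}}^2 \geq 0$, and definiteness uses surjectivity of $\pi$ (hence injectivity of $\pi^\prime$). Since $B$ is SPD, $BA$ is self-adjoint and positive with respect to $(\bullet,\bullet)_A$, so its spectrum is real and positive and $\kappa(BA) = \lambda_{\max}(BA)/\lambda_{\min}(BA)$.

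Next I would establish the two-sided bound on $\langle r, Br\rangle$ in terms of $\langle r, A^{-1}r\rangle$, since this immediately controls $\kappa(BA)$: one has the variational characterization
\begin{equation*}
\langle r, Br\rangle = \|\bar{A}^{-1}\pi^\prime r\|_{\bar{A}}^2 = \min_{\bar{v}\in\bar{V},\ \pi\bar{v} = \text{(nothing needed)}} \cdots
\end{equation*}
— more precisely, I would use the key identity $\langle r, Br\rangle = \min\{\|\bar{v}\|_{\bar{A}}^2 : \bar{v}\in\bar{V},\ \pi^\prime r = \bar{A}\bar{v}\}$ is not quite the right form; instead the clean route is: for any $\bar{v}\in\bar{V}$, Cauchy--Schwarz in $(\bullet,\bullet)_{\bar A}$ gives $\langle r, \pi\bar{v}\rangle = \langle \pi^\prime r, \bar{v}\rangle = (\bar{A}^{-1}\pi^\prime r, \bar{v})_{\bar A} \leq \langle r, Br\rangle^{1/2}\|\bar{v}\|_{\bar A}$, with equality when $\bar{v} = \bar{A}^{-1}\pi^\prime r$. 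Combined with surjectivity of $\pi$, this yields the dual characterization $\langle r, Br\rangle^{1/2} = \sup_{v\in V}\frac{\langle r, v\rangle}{\inf\{\|\bar{v}\|_{\bar A} : \pi\bar{v} = v\}}$.

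For the upper bound, fix $r\in V^\prime$ and any $v\in V$; by assumption (ii) choose $\bar{v}$ with $\pi\bar{v} = v$ and $\|\bar{v}\|_{\bar A}\leq c_2\|v\|_A$, so $\langle r, v\rangle = \langle \pi^\prime r, \bar{v}\rangle \leq \langle r, Br\rangle^{1/2}\|\bar{v}\|_{\bar A}\leq c_2\langle r, Br\rangle^{1/2}\|v\|_A$; taking the supremum over $v$ and recalling $\|r\|_{A^{-1}} := \langle r, A^{-1}r\rangle^{1/2} = \sup_{v}\langle r,v\rangle/\|v\|_A$ gives $\langle r, A^{-1}r\rangle \leq c_2^2\langle r, Br\rangle$, i.e.\ the left inequality. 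For the lower bound, take $\bar{v} = \bar{A}^{-1}\pi^\prime r$ so that $\langle r, Br\rangle = (\bar{v},\bar{v})_{\bar A} = \langle \pi^\prime r, \bar{v}\rangle = \langle r, \pi\bar{v}\rangle \leq \|r\|_{A^{-1}}\|\pi\bar{v}\|_A \leq c_1\|r\|_{A^{-1}}\|\bar{v}\|_{\bar A} = c_1\langle r, A^{-1}r\rangle^{1/2}\langle r, Br\rangle^{1/2}$ by assumption (i), which rearranges to $\langle r, Br\rangle\leq c_1^2\langle r, A^{-1}r\rangle$, the right inequality.

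Finally I would convert the norm equivalence into the condition number bound. Writing $r = Av$ and using $\langle Av, A^{-1}Av\rangle = \langle Av, v\rangle = \|v\|_A^2$ and $\langle Av, BAv\rangle = (BAv, v)_A$, the two-sided bound becomes $c_2^{-2}\|v\|_A^2 \leq (BAv,v)_A \leq c_1^{-2}\|v\|_A^2$ for all $v\in V$, whence by self-adjointness of $BA$ in $(\bullet,\bullet)_A$ we get $\lambda_{\min}(BA)\geq c_2^{-2}$ and $\lambda_{\max}(BA)\leq c_1^{-2}$, so $\kappa(BA)\leq (c_1c_2)^2$. The only mild subtlety — the step I would be most careful about — is justifying the equality case/attainment in the dual characterization of $\|r\|_{A^{-1}}$ and of $\langle r,Br\rangle^{1/2}$ in the infinite-dimensional Hilbert space setting; this is handled by the Riesz representation theorem ($A$ and $\bar A$ are isometric isomorphisms onto the duals), so no compactness or reflexivity beyond the Hilbert structure is needed, and the surjectivity hypothesis on $\pi$ is exactly what guarantees the infimum $\inf\{\|\bar v\|_{\bar A} : \pi\bar v = v\}$ is over a nonempty set and is itself attained (it is the norm of the projection of any preimage onto $N(\pi)^{\perp_{\bar A}}$).
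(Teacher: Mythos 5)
The paper cites this lemma to Nepomnyaschikh without proof, so there is no in-paper argument to compare against; what follows is an assessment of your argument on its own terms.

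Your core argument is correct and follows the standard duality route: the identity $\langle r,Br\rangle=\|\bar A^{-1}\pi'r\|_{\bar A}^2$, Cauchy--Schwarz in the $\bar A$-inner product, and the dual-norm characterization $\|r\|_{A^{-1}}=\sup_{v\ne 0}\langle r,v\rangle/\|v\|_A$. Applying assumption (ii) to an arbitrary $v$ correctly yields $\|r\|_{A^{-1}}\le c_2\langle r,Br\rangle^{1/2}$, and the choice $\bar v=\bar A^{-1}\pi'r$ together with assumption (i) correctly yields $\langle r,Br\rangle\le c_1^{2}\langle r,A^{-1}r\rangle$.

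Two things should be flagged, though. First, in your final paragraph the upper bound $(BAv,v)_A\le c_1^{-2}\|v\|_A^2$ is a transcription error for $c_1^{2}\|v\|_A^2$; as written it would give $\kappa(BA)\le c_1^{-2}c_2^{2}=(c_2/c_1)^2$, not $(c_1c_2)^2$, so the last sentence does not follow from the bound you displayed --- you need the corrected exponent. Second, and more interestingly, the two-sided bound you actually prove is
\begin{equation*}
c_2^{-2}\langle r,A^{-1}r\rangle\le\langle r,Br\rangle\le c_1^{2}\langle r,A^{-1}r\rangle,
\end{equation*}
with $c_1$ and $c_2$ in the opposite slots from the lemma as printed. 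Your version is the correct one: taking $V=\bar V$, $\pi=I$, $\bar A=4A$ gives $c_1=1/2$, $c_2=2$, $B=\tfrac14 A^{-1}$, and the printed lower bound $c_1^{-2}\langle r,A^{-1}r\rangle\le\langle r,Br\rangle$ would read $4\le\tfrac14$, which is false, whereas your bound holds with equality. The printed spectral bounds therefore contain a typo (the roles of $c_1$ and $c_2$ are interchanged); since $\kappa(BA)\le(c_1c_2)^2$ is symmetric in the two constants, the condition-number conclusion and its later use in the paper are unaffected. In short, your argument is correct in substance, but labelling your two estimates as ``the left inequality'' and ``the right inequality'' of the statement as printed is a mismatch that should be resolved by noting the typo rather than by attributing the estimates to the wrong constants.
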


The rest of this paper is organized as follows. In Section \ref{secdeRham}, we introduce continuous and finite element de Rham complexes on surfaces. Section \ref{secInterpPiola} presents useful properties of interpolations on surfaces and Piola transformations between surfaces. In Section \ref{secHX}, we develop nodal auxiliary space preconditioners for the surface discrete H(curl) and H(div) problems. Section \ref{secharmonic} is devoted to fast computation of tangential harmonic vector fields by iterative methods. The proposed preconditioners are tested in several numerical experiments in Section \ref{secNE}.

\section{Surface de Rham complex}\label{secdeRham}
Let $\mathcal{M}$ be a smooth surface without boundary ($\partial\mathcal{M}=\emptyset$) in $\mathbb{R}^3$. Naturally $\mathcal{M}$ is endowed with a metric, which is the pullback of the Euclidean metric in $\mathbb{R}^3$ via the embedding $\mathcal{M}\hookrightarrow\mathbb{R}^3$. Let $\delta(x)$ be the signed distance function of $\mathcal{M}$ such that $|\delta(x)|$ is the distance from the point $x\in\mathbb{R}^3$ to $\mathcal{M}$,  $\delta(x)>0$ if $x$ is on the exterior side of $\mathcal{M}$ and $\delta(x)<0$ if $x$ is on the interior side. Then $\bm{\nu}:=\widetilde{\nabla}\delta$ is a smooth unit outward normal vector field on $\mathcal{M}$, where $\widetilde{\nabla}$ is the gradient operator in $\mathbb{R}^3.$ 

\subsection{Differential operators on surfaces}\label{seclinear}
Let $\mathcal{U}$ be a tubular neighborhood of $\mathcal{M}.$ We assume that $\mathcal{U}$ is sufficiently narrow such that $\delta(x)$, $\bm{\nu}(x)$ and the projection $a: \mathcal{U}\rightarrow\mathcal{M}$   
\begin{equation}\label{aprojection}
    a(x):=x-\delta(x)\bm{\nu}(x)\end{equation}
are well-defined at any point  $x\in\mathcal{U}$, see \cite{DemlowDziuk2007}.
A function $v$ on $\mathcal{M}$ could be extended in $\mathcal{U}$ as 
\[
v^\ell(x):=v(a(x)),\quad\forall x\in\mathcal{U}.
\]
Clearly $v^\ell$ is the constant extension of $v$ along $\bm{\nu}$, the normal direction of $\mathcal{M}.$ Let $\bm{v}$ be a tangential vector field along $\mathcal{M}$. The surface/tangential gradient, divergence, rotational gradient and curl along $\mathcal{M}$ are given by
\begin{equation}\label{dM}
\begin{aligned}
    &\nabla v=\nabla_\mathcal{M}v:=\widetilde{\nabla} v^\ell-(\bm{\nu}\cdot\widetilde{\nabla} v^\ell)\bm{\nu},\\
    &\nabla\cdot \bm{v}=\nabla_\mathcal{M}\cdot \bm{v}:=\widetilde{\nabla}\cdot \bm{v}^\ell-\bm{\nu}\cdot(\widetilde{\nabla}\bm{v}^\ell)\bm{\nu},\\
    &\nabla^\perp v=\nabla_\mathcal{M}^\perp v:=(\nabla_\mathcal{M} v)\times\bm{\nu},\\
    &\nabla\times \bm{v}=\nabla_\mathcal{M}\times \bm{v}:=\nabla_\mathcal{M}\cdot(\bm{v}\times\bm{\nu}),
\end{aligned}
\end{equation}
respectively.
In fact, $\nabla_\mathcal{M}\cdot$ is the $L^2(\mathcal{M})$-adjoint of $-\nabla_\mathcal{M}$ and 
\begin{align*}
    &(\nabla_\mathcal{M}\times)\circ\nabla_\mathcal{M}=0,\quad (\nabla_\mathcal{M}\cdot)\circ\nabla_\mathcal{M}^\perp=0.
\end{align*}  The composite  $\Delta_\mathcal{M}=(\nabla_\mathcal{M}\cdot)\circ\nabla_\mathcal{M}$ is the Laplace--Beltrami operator (surface Laplacian) on $\mathcal{M}$. 
On a surface $\mathcal{M}_\alpha$, we adopt the notation 
\[ \text{d}_\alpha^-=\nabla_{\mathcal{M}_\alpha}^\perp,~\text{d}_\alpha=\nabla_{\mathcal{M}_\alpha}\cdot\quad\text{ or }\quad\text{d}_\alpha^-=\nabla_{\mathcal{M}_\alpha},~\text{d}_\alpha=\nabla_{\mathcal{M}_\alpha}\times\]
such that $\text{d}_\alpha\circ\text{d}_\alpha^-=0$, where the subscript $\alpha$ might be \emph{suppressed} or $\alpha$=1, 2 or $h$ later. It is noted that $\od_\alpha^-$, $\od_\alpha$ are defined a.e. if $\mathcal{M}_\alpha$ is piecewise smooth. 

By $L^2(T\mathcal{M})$ we denote  the space of $L^2$ tangential vector fields on $\mathcal{M}$, where $T\mathcal{M}$ is the set of tangential fields on $\mathcal{M}.$ Consider the following spaces 
\begin{align*}
    H(\od^-)&:=\big\{v\in L^2(\mathcal{M}): \od^-v\in L^2(T\mathcal{M})\big\},\\
    H(\od)&:=\big\{\bm{v}\in L^2(T\mathcal{M}): \od\bm{v}\in L^2(\mathcal{M})\big\}.
\end{align*}
Here $H^1(\mathcal{M})=H(\od^-)$ and we have
the surface de Rham complex
\begin{equation}\label{2ddeRham}
    \begin{CD}
    H(\od^-)@>\od^->>H(\od)@>\od>>L^2(\mathcal{M}).
    \end{CD}
\end{equation}

Let $(\bullet,\bullet)_{\mathcal{M}}$ be the $L^2(\mathcal{M})$-inner product $(\bullet,\bullet)_{\mathcal{M}}$. For a constant $c>0$, our model variational problem is to find $\bm{u}\in H(\text{d})$ such that
\begin{equation}\label{Vd}
		(\text{d}\bm{u},\text{d}\bm{v})_\mathcal{M}+c(\bm{u},\bm{v})_\mathcal{M}=(\bm{g},\bm{v})_{\mathcal{M}},\quad     \bm{v}\in H(\text{d}),
\end{equation}
where $\bm{g}\in L^2(T\mathcal{M})$. We assume that $c>c_0>0$ for some fixed $c_0$ such that the nearly singular case is excluded.

Let  $\varphi: \mathcal{M}_1\rightarrow\mathcal{M}_2$ be a diffeomorphism between two manifolds $\mathcal{M}_1$ and $\mathcal{M}_2$. Given a scalar-valued function $v$ on $\mathcal{M}_1,$ the tangent map $\mathcal{D}\varphi: L^2(T\mathcal{M}_1)\rightarrow L^2(T\mathcal{M}_2)$ satisfies
\begin{equation}\label{gradM1M2}
    \nabla_{\mathcal{M}_1}v|_x=(\mathcal{D}\varphi)^*\nabla_{\mathcal{M}_2}(v\circ\varphi^{-1})|_{\varphi(x)},\quad\forall x\in\mathcal{M}_1.
\end{equation}
Here $(\mathcal{D}\varphi)^*: L^2(T\mathcal{M}_2)\rightarrow L^2(T\mathcal{M}_1)$ is the adjoint linear mapping of $\mathcal{D}\varphi$.

Let $d\sigma_i$ be the surface measure on $\mathcal{M}_i$, and $d\sigma_2(\varphi(x))=\mu(x)d\sigma_1(x)$. We summarize Piola transforms on surfaces (cf.~\cite{CockburnDemlow2016,Monk2003}) as follows. 
\begin{subequations}\label{PiolaM12}
\begin{align}
    &\mathcal{P}^{\nabla}_\varphi: L^2(\mathcal{M}_1)\rightarrow L^2(\mathcal{M}_2)\qquad\mathcal{P}^\nabla_\varphi v=\mathcal{P}^{\nabla^\perp}_\varphi v:=v\circ\varphi^{-1},\\
    &\mathcal{P}^{\nabla\cdot}_\varphi: L^2(T\mathcal{M}_1)\rightarrow L^2(T\mathcal{M}_2)\qquad\mathcal{P}^{\nabla\cdot}_\varphi \bm{v}:=\frac{1}{\mu}(\mathcal{D}\varphi)\bm{v},\\
    &\mathcal{P}^{\nabla\times}_\varphi: L^2(T\mathcal{M}_1)\rightarrow L^2(T\mathcal{M}_2)\qquad\mathcal{P}^{\nabla\times}_\varphi \bm{v}:=\big[(\mathcal{D}\varphi)^{-1}\big]^*\bm{v}.
\end{align}
\end{subequations}
Similarly to the Euclidean case, it holds that
\begin{subequations}
\begin{align}
    \mathcal{P}^{\text{d}}_{\varphi^{-1}}&=(\mathcal{P}^{\text{d}}_\varphi)^{-1},\label{Piolainverse}\\
    \mathcal{P}^{\text{d}}_\varphi\circ\text{d}^-_1&=\text{d}_2^-\circ\mathcal{P}^{\od^-}_\varphi.\label{Piolacommuting}
\end{align}
\end{subequations}

\subsection{Finite element discretization}\label{subsecDLB}
When devising numerical schemes for solving \eqref{Vd}, we assume that $\mathcal{M}$ is approximated by a polyhedral surface  $\mathcal{M}_h$ with triangular faces, and  $\mathcal{M}_h$ is sufficiently close to $\mathcal{M}$ such that $\mathcal{M}_h\subset\mathcal{U}.$ Let   $\mathcal{T}_h$ denote the collection of all 2-d faces of $\mathcal{M}_h,$ and $\mathcal{F}_h$ the set of $1$-d faces/edges in $\mathcal{T}_h$.  
\begin{figure}[tbhp]
\centering
\includegraphics[width=10cm,height=5.5cm]{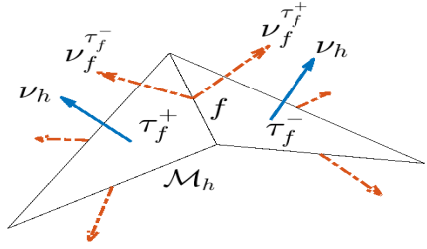}
\caption{Unit normals (solid vectors) and conormals (dashed vectors) on $\mathcal{M}_h$.}
\label{normalfigure}
\end{figure}

Let $\bm{\nu}_h$ be the piecewise constant unit normal vector field on $\mathcal{M}_h$ such that $\bm{\nu}\cdot\bm{\nu}_h>0$. 
Let $\bm{\nu}_f^\tau$ be the outward unit \emph{conormal} vector for the face $f\in\mathcal{F}_h$ of an element $\tau\in\mathcal{T}_h$. In other words, $\bm{\nu}_f^\tau$ is orthogonal to both $\bm{\nu}_h|_\tau$ and $f$ and is pointing towards the exterior of $\tau$. For each  $f\in\mathcal{F}_h$, $\bm{t}_f$ is a unit tangent vector along $f$, and $\tau_f^+, \tau_f^-\in\mathcal{T}_h$ are the two elements sharing $f$, see Figure \ref{normalfigure}.

By $\mathbb{P}_r(U)$ we denote the space of polynomials of degree at most $r$ on a flat surface $U$ is denoted. Let $\hat{\tau}$ be a reference triangle in $\mathbb{R}^2$, $\hat{\bm{x}}$ the coordinate position vector in $\mathbb{R}^2$, and $\varphi_\tau: \hat{\tau}\rightarrow\tau$ be an affine mapping such that $\varphi_\tau(\hat{\tau})=\tau$. We consider the following local shape function spaces
\begin{equation*}
\begin{aligned}
  \mathbb{P}^{\nabla\times}(\tau)&:=\mathcal{P}^{\nabla\times}_{\varphi_\tau}\big([\mathbb{P}_0(\hat{\tau})]^2+\big\{\hat{\bm{p}}\in\big[\mathbb{P}_0(\hat{\tau})\big]^2: \hat{\bm{p}}\cdot\hat{\bm{x}}=0\big\}\big),\\
  \mathbb{P}^{\nabla\cdot}(\tau)&:=\mathcal{P}^{\nabla\cdot}_{\varphi_\tau}\big([\mathbb{P}_0(\hat{\tau})]^2+\mathbb{P}_0(\hat{\tau})\hat{\bm{x}}\big).
\end{aligned}
\end{equation*}
Let  $\nabla_h=\nabla_{\mathcal{M}_h}$, $\nabla_h^\perp=\nabla^\perp_{\mathcal{M}_h}$,  $\nabla_h\cdot=\nabla_{\mathcal{M}_h}\cdot$,  $\nabla_h\times=\nabla_{\mathcal{M}_h}\times$.
The surface nodal element space is
\[
    H_h(\nabla_h)=H_h(\nabla_h^\perp):=\big\{v_h\in C^0(\mathcal{M}_h): v_h|_\tau\in\mathbb{P}_1(\tau)~\forall \tau\in\mathcal{T}_h\big\}.\]
Lowest-order edge ($N_0$ \cite{Nedelec1980,Monk2003}) and face ($RT_0$ \cite{RaviartThomas1977}) element spaces on $\mathcal{M}_h$ are
\begin{equation}\label{Hcurldef}
    \begin{aligned}
    H_h(\nabla_h\times):=&\big\{\bm{v}_h\in L^2(T\mathcal{M}_h): \bm{v}_h|_\tau\in \mathbb{P}^{\nabla\times}(\tau)\text{ for each }\tau\in\mathcal{T}_h,\\
    &\qquad\llbracket \bm{v}_h\rrbracket_{t,f}=0\text{ for each }f\in\mathcal{F}_h\big\},
\end{aligned}
\end{equation}
\begin{equation}\label{Hdivdef}
   \begin{aligned}
    H_h(\nabla_h\cdot):=&\big\{\bm{v}_h\in L^2(T\mathcal{M}_h): \bm{v}_h|_\tau\in \mathbb{P}^{\nabla\cdot}(\tau)\text{ for each }\tau\in\mathcal{T}_h,\\
    &\qquad\llbracket \bm{v}_h\rrbracket_{\nu,f}=0\text{ for each }f\in\mathcal{F}_h\big\},
\end{aligned} 
\end{equation}
respectively. Here tangential and conormal jumps in \eqref{Hcurldef}, \eqref{Hdivdef} are
\begin{align*}
    &\llbracket \bm{v}_h\rrbracket_{t,f}:=\bm{v}_h|_{\tau_f^+}-(\bm{v}_h|_{\tau_f^+}\cdot\bm{\nu}_f^{\tau_f^+})\bm{\nu}_f^{\tau_f^+}-\big[\bm{v}_h|_{\tau_f^-}-(\bm{v}_h|_{\tau_f^-}\cdot\bm{\nu}_f^{\tau_f^-})\bm{\nu}_f^{\tau_f^-}\big],\\
    &\llbracket \bm{v}_h\rrbracket_{\nu,f}:=\bm{v}_h|_{\tau_f^+}\cdot\bm{\nu}_f^{\tau_f^+}+\bm{v}_h|_{\tau_f^-}\cdot\bm{\nu}_f^{\tau_f^-}.
\end{align*}

Let $(\bullet,\bullet)_h=(\bullet,\bullet)_{\mathcal{M}_h}$ be the $L^2$ inner product on $\mathcal{M}_h$.  
The finite element discretization of \eqref{Vd} seeks $\bm{u}_h\in H_h(\text{d}_h)$ such that
\begin{equation}\label{Vhd}
		(\text{d}_h\bm{u}_h,\text{d}_h\bm{v}_h)_h+c(\bm{u}_h,\bm{v}_h)_h=(\bm{g}_h,\bm{v}_h)_h,\quad     \bm{v}_h\in H_h(\text{d}_h),
\end{equation}
where $\bm{g}_h\in L^2(T\mathcal{M}_h)$ approximates $\bm{g}$. The bilinear form in \eqref{Vhd} induces a linear operator $A^\text{d}_h: H_h(\text{d}_h)\rightarrow H_h(\text{d}_h)^\prime$ by
\begin{equation*}
    \langle A^\text{d}_h\bm{v}_h,\bm{w}_h\rangle:=(\text{d}_h\bm{v}_h,\text{d}_h\bm{w}_h)_h+c(\bm{v}_h,\bm{w}_h)_h,\quad\forall\bm{v}_h,\bm{w}_h\in H_h(\text{d}_h).
\end{equation*}
We shall develop efficient preconditioners for the discrete operator $A^\text{d}_h.$ 

Let $N(\od)^\perp$ (resp.~$N(\od_h)^\perp$) be the $L^2$-orthogonal complement of $N(\od)$ (resp.~$N(\od_h)^\perp$) in $H(\od)$ (resp.~$H_h(\od_h)$).
We present the continuous and discrete Poincar\'e inequalities (cf.~\cite{ArnoldFalkWinther2006,HolstStern2012}).
\begin{lemma}[Poincar\'e inequality]
Let $\|\bullet\|=\|\bullet\|_{L^2(\mathcal{M}_h)}$. There exists constants $c_P>0$, $c_{h,P}>0$ such that 
\begin{subequations}
\begin{align}
    &\|\bm{v}\|_{L^2(\mathcal{M})}\leq c_P\|{\od}\bm{v}\|_{L^2(\mathcal{M})},\quad\forall\bm{v}\in N(\od)^\perp,\label{Poincare}\\
    &\|\bm{v}_h\|\leq c_{h,P}\|{\od_h}\bm{v}_h\|,\quad\forall\bm{v}_h\in N(\od_h)^\perp.\label{discretePoincare}
\end{align}
\end{subequations}
\end{lemma}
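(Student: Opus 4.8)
The plan is to read both \eqref{Poincare} and \eqref{discretePoincare} as the single statement that $\od$, respectively $\od_h$, is bounded below on the $L^2$-orthogonal complement of its kernel, but to prove them by two different mechanisms. For the continuous bound I would first show that the range $R(\od)$ is closed in $L^2(\mathcal{M})$ and then invoke the bounded inverse theorem: the restriction $\od\colon N(\od)^\perp\to R(\od)$ is a continuous linear bijection of Hilbert spaces — injective because $N(\od)^\perp\cap N(\od)=\{0\}$ and surjective onto its own range — so its inverse is bounded, whence $\|\bm{v}\|_{L^2(\mathcal{M})}\le\|\bm{v}\|_{H(\od)}\le c_P\|\od\bm{v}\|_{L^2(\mathcal{M})}$ for every $\bm{v}\in N(\od)^\perp$.

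The only real work is the closed-range property. I would obtain it either by citing the classical fact that the $L^2$ de Rham complex \eqref{2ddeRham} on a smooth compact boundaryless surface is a closed Hilbert complex with the compactness property (as used in \cite{ArnoldFalkWinther2006,HolstStern2012}), or, for a self-contained treatment of this low-dimensional complex, by identifying the range explicitly: since $\partial\mathcal{M}=\emptyset$, integration by parts gives $\int_\mathcal{M}\od\bm{v}\,d\sigma=0$, and conversely, for any $f\in L^2(\mathcal{M})$ with vanishing mean on each connected component, solvability of $\Delta_\mathcal{M}\phi=f$ together with $\Delta_\mathcal{M}=(\nabla_\mathcal{M}\cdot)\circ\nabla_\mathcal{M}$ exhibits $\nabla_\mathcal{M}\phi$ as a preimage in $H(\od)$ (the case $\od=\nabla_\mathcal{M}\times$ reducing to this one through the rotation $\bm{v}\mapsto\bm{v}\times\bm{\nu}$, which identifies $H(\nabla_\mathcal{M}\times)$ with $H(\nabla_\mathcal{M}\cdot)$ and the surface curl with the surface divergence). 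Either way $R(\od)$ is closed. A variant avoiding the open mapping theorem is a compactness/contradiction argument: a normalized sequence $\bm{v}_n\in N(\od)^\perp$ with $\od\bm{v}_n\to 0$ has, by compactness of $N(\od)^\perp\hookrightarrow L^2(T\mathcal{M})$, an $L^2$-limit lying in $N(\od)\cap N(\od)^\perp=\{0\}$ yet of unit norm — a contradiction.

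For the discrete inequality \eqref{discretePoincare} I would use a much softer argument: $H_h(\od_h)$ is finite-dimensional, hence so is $N(\od_h)^\perp$, and on that space $\bm{v}_h\mapsto\|\od_h\bm{v}_h\|$ is an honest norm — it is a seminorm, and it vanishes only on $N(\od_h)\cap N(\od_h)^\perp=\{0\}$ — so by equivalence of norms on a finite-dimensional space it dominates $\|\bm{v}_h\|$ up to a constant $c_{h,P}>0$. Note that the statement asks only for existence of $c_{h,P}$, not for a value independent of $h$, so nothing more is needed here.

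Thus the main obstacle is essentially the closed-range/compactness input for the smooth surface $\mathcal{M}$, which I would cite from the finite element exterior calculus literature rather than reprove; the remaining steps are routine functional analysis. I should flag that the genuinely harder result — not claimed in this lemma but needed later for $h$-robust preconditioning — is a \emph{uniform} discrete Poincar\'e inequality, whose proof would require uniformly bounded commuting cochain (quasi-)interpolations onto the finite element de Rham subcomplex together with an $h$-stable discrete Hodge decomposition; that is where I would expect the real technical difficulty to lie if one pushed further.
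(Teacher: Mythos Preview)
Your proposal is correct, but note that the paper does not actually give a proof of this lemma: it is stated with the parenthetical ``cf.~\cite{ArnoldFalkWinther2006,HolstStern2012}'' and followed only by the remark that $c_{h,P}$ may depend on $h$, with uniformity requiring a bounded cochain projection. So there is nothing in the paper to compare against beyond the citations themselves.

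That said, your argument is precisely the content behind those citations. The continuous inequality \eqref{Poincare} is the abstract Poincar\'e inequality for a closed Hilbert complex, and your two routes---closed range plus open mapping, or the compactness/contradiction argument---are exactly the standard proofs in \cite{ArnoldFalkWinther2006}. One small imprecision: in the contradiction argument you write ``by compactness of $N(\od)^\perp\hookrightarrow L^2(T\mathcal{M})$'', but that inclusion is not compact on its own. What you need is that $\bm{v}_n\in N(\od)^\perp$ forces $(\od^-)^*\bm{v}_n=0$ (since $R(\od^-)\subseteq N(\od)$), so the normalized sequence with $\od\bm{v}_n\to0$ is bounded in $H(\od)\cap H((\od^-)^*)$, and it is \emph{that} space which embeds compactly into $L^2$ by the compactness property of the de Rham complex on a smooth closed surface (equivalently, Gaffney plus Rellich). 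Your explicit-range alternative via $\Delta_\mathcal{M}$ is also fine and is specific to this two-term complex.

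Your discrete argument---finite dimensionality plus equivalence of norms---is the right one for the lemma as stated, and your closing remark about uniformity in $h$ matches the paper's own caveat immediately after the lemma.
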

It is possible that $c_{h,P}$ depends on the grid size $h$ of $\mathcal{M}_h$. The work \cite{HolstStern2012} shows that $c_{h,P}$ is an absolute constant provided there exists a uniformly bounded cochain projection on $\mathcal{M}_h.$ 

For $f\in\mathcal{F}_h$, $\tau\in\mathcal{T}_h$, let $h_f=\text{diam}(f)$, $h_\tau=\text{diam}(\tau)$, and $h$ be the piecewise constant on $\mathcal{M}_h$ such that $h|_\tau=h_\tau$. Let $h_{\max}:=\max_{\tau\in\mathcal{T}_h}h_\tau$, $h_{\min}:=\min_{\tau\in\mathcal{T}_h}h_\tau$ and $c_{\rm qu}:=h_{\max}/h_{\min}$.
By $C_1\lesssim C_2$ we mean $C_1\leq CC_2$ with $C$ being a generic constant dependent only on $\mathcal{M}$, the local mesh quality of $\mathcal{T}_h$, and $r$, $c_{h,P}$, $c_0$, $c_{\rm qu}$. We say $C_1\simeq C_2$ provided $C_1\lesssim C_2$ and $C_2\lesssim C_1$. For SPD operators $A$ and $B$, $A\lesssim B$ provided $(v,v)_A\lesssim(v,v)_B$ for all $v.$ 

Let $|\bullet|$ denote the Euclidean norm. We make the following common assumption in surface finite element literature  (cf.~\cite{Demlow2009,DednerMadhavenStinner2013,CockburnDemlow2016})
\begin{equation}\label{nunuh}
|\delta(x)|\lesssim h_\tau,\quad|\bm{\nu}(x)-\bm{\nu}_h(x)|\lesssim h_\tau,\quad\forall\tau\in\mathcal{T}_h,~\forall x\in\tau.
\end{equation}
Given $f\in\mathcal{F}_h$, it follows that $\bm{\nu}_h|_{\tau_f^+}-\bm{\nu}_h|_{\tau_f^-}=O(h_f)$ and 
\begin{equation}\label{nuplusminus}
    |\bm{\nu}_f^{\tau_f^+}+\bm{\nu}_f^{\tau_f^-}|\lesssim h_f.
\end{equation}

\section{Interpolations and Piola transformations on surfaces}\label{secInterpPiola}
As a first step, we must present several estimates for interpolations and Piola transformations on surfaces.

\subsection{Surface interpolations}
Let $d\sigma$ (resp.~$d\sigma_h$) denote the surface measure of $\mathcal{M}$ (resp.~$\mathcal{M}_h$), and $\Omega_z$ the union of elements in $\mathcal{T}_h$ sharing the grid vertex $z$. The Cl\'ement interpolation $\mathcal{I}_h$ in the nodal element space $H_h(\nabla_h)$  is given by 
\begin{equation*}
    (\mathcal{I}_hv)(z):=\frac{1}{\int_{\Omega_z}1d\sigma_h}\int_{\Omega_z}vd\sigma_h\quad\text{ for each vertex $z$ in $\mathcal{T}_h$}.
\end{equation*}
On each element $\tau\in\mathcal{T}_h,$
let  $\pi_\tau^{\text{d}}$ be the canonical interpolation onto $\mathbb{P}^{\text{d}}(\tau)$. By $H^1(TU), W^{k,p}(TU)$ we denote spaces of tangential  vector fields with standard Sobolev regularity on a surface $U$. For $\bm{v}\in H^1(T\tau)$,   $\pi_\tau^{\nabla\times}\bm{v}\in\mathbb{P}^{\nabla\times}(\tau)$ and $\opi_\tau^{\nabla\cdot}\bm{v}\in\mathbb{P}^{\nabla\cdot}(\tau)$ are determined by
\begin{align}
    \int_f(\pi_\tau^{\nabla\times}\bm{v})\cdot \bm{t}_fds&=\int_f\bm{v}\cdot\bm{t}_fds,\\
    \int_f(\pi_\tau^{\nabla\cdot}\bm{v})\cdot\bm{\nu}^\tau_fqds&=\int_f\bm{v}\cdot \bm{\nu}^\tau_fqds,\quad\forall f\subset\partial\tau,~f\in\mathcal{F}_h.\label{Pidivtau}
\end{align}
Let $\|\bullet\|_\tau=\|\bullet\|_{L^2(\tau)}$, and  $\Omega_\tau$ be the union of elements in $\mathcal{T}_h$ sharing a vertex with $\tau\in\mathcal{T}_h.$ For $v\in H^1(\mathcal{M}_h)$, $\bm{v}\in H^1(T\tau)$, classical results (cf.~\cite{DemlowDziuk2007,CockburnDemlow2016}) yield
\begin{subequations}\label{Clement}
\begin{align}
\|\mathcal{I}_hv\|_\tau&\lesssim\|v\|_{\Omega_\tau},\label{Ihl2}\\
h_\tau^{-1}\|v-\mathcal{I}_hv\|_\tau+|\mathcal{I}_hv|_{H^1(\tau)}&\lesssim |v|_{H^1(\Omega_\tau)},\label{Ihh1}\\
\|\bm{v}-\opi^{\od}_\tau \bm{v}\|_\tau&\lesssim h_\tau|\bm{v}|_{H^1(\tau)}.\label{Pidiv}
\end{align}
\end{subequations}

\begin{figure}[tbhp]
\centering
\includegraphics[width=8cm,height=5cm]{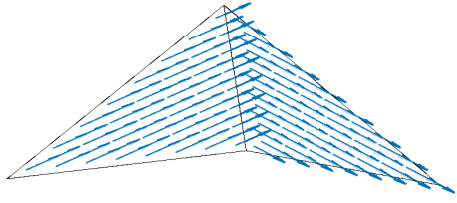}
\caption{A tangential vector field  on $\mathcal{M}_h$.}
\label{tangentialfigure}
\end{figure}
Let  $\pi_h^\text{d}$ be the canonical interpolation onto $H_h(\text{d}_h)$ such that 
$(\pi_h^{\text{d}}\bm{v})|_\tau=\pi_\tau^\text{d}(\bm{v}|_\tau)$, $\forall\tau\in\mathcal{T}_h.$ The space of tangential and piecewise $H^1$-fields on $\mathcal{M}_h$ is
\begin{align*}
    H^1_h(T\mathcal{M}_h)&:=\big\{\bm{v}_h\in L^2(T\mathcal{M}_h): \bm{v}_h|_\tau\in H^1(T\tau)~\forall\tau\in\mathcal{T}_h\big\},
\end{align*}
see Figure \ref{tangentialfigure} for example. 
The next lemma presents sufficient conditions for the well-posedness of $\pi_h^{\od},$ which follows from the definitions \eqref{Hcurldef} and \eqref{Hdivdef} and a trace theorem.
\begin{lemma}\label{pihd}
For $\bm{v}\in H_h^1(T\mathcal{M}_h)$, there exists a unique $\pi_h^{\nabla\cdot}\bm{v}\in H_h(\nabla_h\cdot)$ if $\llbracket \bm{v}\rrbracket_{\nu,f}=0$ across each $f\in\mathcal{F}_h$; and  $\pi_h^{\nabla\times}\bm{v}\in H_h(\nabla_h\times)$ is well-defined provided $\llbracket \bm{v}\rrbracket_{t,f}=0$ for each $f\in\mathcal{F}_h$.
\end{lemma}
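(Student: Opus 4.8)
The plan is to establish two separate facts. \emph{Local well-posedness:} for $\bm v\in H^1_h(T\mathcal M_h)$ and each $\tau\in\mathcal T_h$, the canonical interpolant $\pi_\tau^{\od}(\bm v|_\tau)\in\mathbb P^{\od}(\tau)$ is uniquely determined by its defining moments, so that the elementwise assembly $(\pi_h^{\od}\bm v)|_\tau:=\pi_\tau^{\od}(\bm v|_\tau)$ makes sense in $\prod_{\tau\in\mathcal T_h}\mathbb P^{\od}(\tau)$. \emph{Interelement conformity:} the stated jump hypothesis on $\bm v$ forces the matching jump of $\pi_h^{\od}\bm v$ to vanish across every $f\in\mathcal F_h$, placing $\pi_h^{\od}\bm v$ in the conforming space $H_h(\od_h)$.

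For local well-posedness, I would use a trace theorem on the flat shape-regular triangle $\tau$: since $\bm v|_\tau\in H^1(T\tau)$, its tangential trace on $\partial\tau$ lies in $L^2(\partial\tau)$, and hence the edge functionals $\bm w\mapsto\int_f\bm w\cdot\bm t_f\,ds$ and $\bm w\mapsto\int_f(\bm w\cdot\bm\nu_f^\tau)q\,ds$ (for $q\in\mathbb P_0(f)$, $f\subset\partial\tau$) are well-defined bounded functionals on $H^1(T\tau)$. There are three of them, matching $\dim\mathbb P^{\nabla\times}(\tau)=\dim\mathbb P^{\nabla\cdot}(\tau)=3$, and their unisolvence on $\mathbb P^{\od}(\tau)$ follows by pulling everything back to $\hat\tau$ through the affine Piola maps $\mathcal P^{\nabla\times}_{\varphi_\tau}$ and $\mathcal P^{\nabla\cdot}_{\varphi_\tau}$, which (as $\mathcal D\varphi_\tau$ and $\mu$ are constant on $\tau$) are linear isomorphisms carrying edge traces to edge traces and moments to moments; unisolvence on $\hat\tau$ is the classical fact for $N_0$ and $RT_0$. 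Thus $\pi_\tau^{\od}(\bm v|_\tau)$, and therefore $\pi_h^{\od}\bm v$, is well-defined.

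For conformity, the key structural observation — again verified on $\hat\tau$ and transported by the affine Piola transform — is that the relevant scalar trace of a lowest-order shape function on an edge is a single constant: $\bm w|_f\cdot\bm t_f$ is constant on $f$ for every $\bm w\in\mathbb P^{\nabla\times}(\tau)$, and $\bm w|_f\cdot\bm\nu_f^\tau$ is constant on $f$ for every $\bm w\in\mathbb P^{\nabla\cdot}(\tau)$. Taking $\bm w=\pi_\tau^{\od}(\bm v|_\tau)$ and using the defining moment identities then gives, on each $f\subset\partial\tau$,
\begin{equation*}
(\pi_\tau^{\nabla\times}\bm v)|_f\cdot\bm t_f=\frac1{|f|}\int_f\bm v|_\tau\cdot\bm t_f\,ds,\qquad
(\pi_\tau^{\nabla\cdot}\bm v)|_f\cdot\bm\nu_f^\tau=\frac1{|f|}\int_f\bm v|_\tau\cdot\bm\nu_f^\tau\,ds.
\end{equation*}
For an interior edge $f$ shared by $\tau_f^\pm$, subtracting the along-edge traces and using $\llbracket\bm v\rrbracket_{t,f}=0$ (only its zeroth moment on $f$ is actually needed) yields $\llbracket\pi_h^{\nabla\times}\bm v\rrbracket_{t,f}=0$; adding the conormal traces with the signs already built into $\llbracket\cdot\rrbracket_{\nu,f}$ and using $\llbracket\bm v\rrbracket_{\nu,f}=0$ yields $\llbracket\pi_h^{\nabla\cdot}\bm v\rrbracket_{\nu,f}=0$. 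Comparing with \eqref{Hcurldef} and \eqref{Hdivdef}, this is exactly the membership $\pi_h^{\nabla\times}\bm v\in H_h(\nabla_h\times)$, respectively $\pi_h^{\nabla\cdot}\bm v\in H_h(\nabla_h\cdot)$.

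The argument is essentially the standard unisolvence-plus-trace proof for the lowest-order edge and face elements, carried out locally and then patched; the only point requiring care is that on the polyhedral surface $\mathcal M_h$ the conormals $\bm\nu_f^{\tau_f^+}$ and $\bm\nu_f^{\tau_f^-}$ of a shared edge are not antiparallel, so one must read the interelement conditions off the definitions \eqref{Hcurldef}, \eqref{Hdivdef} rather than from a naive ``equal trace'' requirement — but since those definitions already encode the correct signs, the per-edge moment identities above suffice, and no auxiliary geometric estimate such as \eqref{nuplusminus} enters here.
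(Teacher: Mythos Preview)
Your argument is correct and matches the paper's approach: the paper does not give a detailed proof but simply remarks that the lemma ``follows from the definitions \eqref{Hcurldef} and \eqref{Hdivdef} and a trace theorem,'' and your write-up is exactly the natural fleshing-out of that sentence --- the trace theorem supplies local well-posedness of the edge moments on $H^1(T\tau)$, and the definitions of the jump conditions together with the fact that lowest-order traces are constants along each edge give conformity.
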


Let $\opi_h^{\text{d}^-}$ be the linear nodal interpolation onto $H_h(\nabla_h).$ It holds that
\begin{equation}\label{Pihcommute}
    \text{d}^-_h\circ\opi_h^{\text{d}^-}=\opi_h^{\text{d}}\circ\od^-_h.
\end{equation}
Given a non-tangential $H^1$-field $\bm{v}$ on $\mathcal{M}_h,$ let $\bm{v}^\parallel$ (resp.~$\bm{v}^\perp$) denote its  tangential (resp.~normal) component, see Figure \ref{decompfigure}.
We note that $\pi_h^{\nabla\times}\bm{v}=\pi_h^{\nabla\times}(\bm{v}^\parallel)$ is  well-defined. 
However, $\opi_h^{\nabla\cdot}: [H^1(\mathcal{M}_h)]^3\rightarrow H_h(\nabla_h\cdot)$ becomes ambiguous because $\bm{v}_h\in [H^1(\mathcal{M}_h)]^3$ has a discontinuous conormal component  across each face $f\in\mathcal{F}_h$ and $\llbracket \bm{v}\rrbracket_{\nu,f}$ does not vanish. To remedy this situation, we propose a modified $H(\rm div)$ interpolation $\bar{\opi}_\tau^{\nabla\cdot}\bm{v}$ for each $\tau\in\mathcal{T}_h$ and $\bm{v}\in [H^1(\tau)]^3$ by
\begin{equation}\label{Pibardivtau}
    \int_f(\bar{\opi}_\tau^{\nabla\cdot}\bm{v})\cdot\bm{\nu}^\tau_fds=\delta_\tau^f\int_f\bm{v}\cdot \bm{\nu}^{\tau^+_f}_fds,~\forall f\subset\partial\tau,~f\in\mathcal{F}_h,
\end{equation}
where $\delta_\tau^f=1$ if $\tau=\tau_f^+$ and $\delta_\tau^f=-1$ otherwise. Note that \eqref{Pidivtau} uses the outward conormal element-wise while \eqref{Pibardivtau} depends on a pre-assigned conormal $\bm{\nu}^{\tau_f^+}_f$ for each face $f$. When the surface $\mathcal{M}_h$ is globally flat, we have $\bm{\nu}_f^{\tau_f^+}=-\bm{\nu}_f^{\tau_f^-}$ and the two interpolations $\pi_h^{\nabla\cdot}$ and $\bar{\pi}_h^{\nabla\cdot}$ coincide. Let $\bar{\pi}_h^{\nabla\cdot}\bm{v}\in H_h(\nabla_h\cdot)$ be the global  interpolant such that \[
(\bar{\pi}_h^{\nabla\cdot}\bm{v})|_\tau=\bar{\pi}_\tau^{\nabla\cdot}(\bm{v}|_\tau),\quad\forall\tau\in\mathcal{T}_h.
\]
\begin{figure}[tbhp]
\centering
\includegraphics[width=12cm,height=5.5cm]{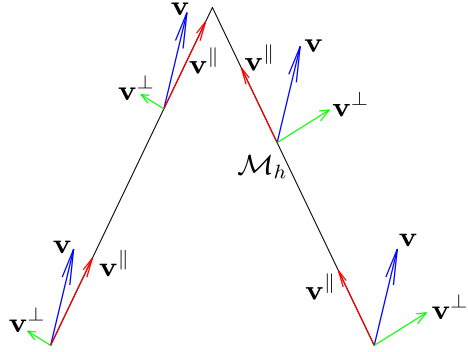}
\caption{Decomposition of a non-tangential vector field $\bm{v}$.}
\label{decompfigure}
\end{figure}
The trace theorem implies that the domain of $\bar{\pi}_h^{\nabla\cdot}$ contains $[H^1(\mathcal{M}_h)]^3$. Moreover, $\bar{\pi}_h^{\nabla\cdot}\bm{v}$ is also well-defined for any discontinuous $\bm{v}\in H_h^1(T\mathcal{M}_h)$.
For convenience, we may use the trivial notation $\bar{\pi}_h^{\nabla\times}=\pi_h^{\nabla\times}$. The properties of $\bar{\pi}^{\nabla\times}_h$ and $\bar{\pi}^{\nabla\cdot}_h$ are presented in the next lemma. 
\begin{lemma}\label{Pidinterp}
Let $\tau\in\mathcal{T}_h$ and $\bm{v}\in [H^1(\tau)]^3$. It holds that
\begin{subequations}
\begin{align}
\|\bm{v}-\bar{\pi}_h^{\nabla\cdot}\bm{v}\|_\tau&\lesssim \|\bm{v}^\perp\|_\tau+h_\tau\|\bm{v}\|_{H^1(\tau)},\label{divinterp}\\
\|\bm{v}-\pi_h^{\nabla\times}\bm{v}\|_\tau&\lesssim \|\bm{v}^\perp\|_\tau+h_\tau|\bm{v}^\parallel|_{H^1(\tau)}.\label{curlinterp}
\end{align}
\end{subequations}
For $\bm{v}_h\in[\mathbb{P}_1(\tau)]^3$, it holds that
\begin{subequations}\label{Pibound}
\begin{align}
\|\bar{\pi}_h^{\od}\bm{v}_h\|_\tau&\lesssim\|\bm{v}_h\|_\tau,\label{Pibound1}\\
\|{\od}_h\bar{\pi}_h^{\od}\bm{v}_h\|_\tau&\lesssim\|\bm{v}_h\|_\tau+\|{\od}_h\bm{v}_h^\parallel\|_\tau.    \label{Pibound2}
\end{align}
\end{subequations}
\end{lemma}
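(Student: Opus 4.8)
The plan is to reduce all four estimates to the behaviour of the \emph{standard} canonical interpolants $\pi_\tau^{\nabla\times}$, $\pi_\tau^{\nabla\cdot}$ from \eqref{Pidivtau} acting on the tangential part of the argument, plus a single correction term produced by the pre-assigned conormals in \eqref{Pibardivtau}. Throughout I would fix $\tau\in\mathcal{T}_h$ and decompose a field on $\tau$ as $\bm{v}=\bm{v}^\parallel+\bm{v}^\perp$ with respect to the \emph{constant} vector $\bm{\nu}_h|_\tau$, so that $\bm{v}^\parallel\in H^1(T\tau)$, $\bm{v}^\perp\parallel\bm{\nu}_h|_\tau$, $\|\bm{v}^\perp\|_\tau\le\|\bm{v}\|_\tau$, and $|\bm{v}^\parallel|_{H^1(\tau)}\lesssim|\bm{v}|_{H^1(\tau)}$ (the last because $\bm{\nu}_h|_\tau$ is constant on $\tau$). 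The elementary ingredients I would use are the interpolation error bound \eqref{Pidiv}; the scaled trace inequality $\|\bm{w}\|_{L^2(f)}\lesssim h_\tau^{-1/2}\|\bm{w}\|_\tau+h_\tau^{1/2}|\bm{w}|_{H^1(\tau)}$ on edges $f\subset\partial\tau$; inverse inequalities on the fixed-dimensional spaces $\mathbb{P}_1(\tau)$, $\mathbb{P}^{\nabla\cdot}(\tau)$, $\mathbb{P}^{\nabla\times}(\tau)$; and the affine/Piola scaling equivalence $\|\bm{w}\|_\tau\simeq\bigl(\sum_{f\subset\partial\tau}\bigl|\int_f\bm{w}\cdot\bm{\nu}^\tau_f\,ds\bigr|^2\bigr)^{1/2}$ for $\bm{w}\in\mathbb{P}^{\nabla\cdot}(\tau)$, with constants depending only on the reference element and mesh quality.

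For \eqref{curlinterp} I would observe that $\bm{t}_f\perp\bm{\nu}_h|_\tau$, so the tangential moments defining $\pi_h^{\nabla\times}$ only see $\bm{v}^\parallel$; thus $\pi_h^{\nabla\times}\bm{v}=\pi_\tau^{\nabla\times}\bm{v}^\parallel$ exactly, and \eqref{curlinterp} follows from $\|\bm{v}-\pi_\tau^{\nabla\times}\bm{v}^\parallel\|_\tau\le\|\bm{v}^\perp\|_\tau+\|\bm{v}^\parallel-\pi_\tau^{\nabla\times}\bm{v}^\parallel\|_\tau$ and \eqref{Pidiv} applied to $\bm{v}^\parallel\in H^1(T\tau)$. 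For \eqref{divinterp} the orthogonality $\bm{\nu}^\tau_f\perp\bm{\nu}_h|_\tau$ likewise gives $\pi_\tau^{\nabla\cdot}\bm{v}=\pi_\tau^{\nabla\cdot}\bm{v}^\parallel$, so the task reduces to estimating $\bm{w}:=\bar{\pi}_h^{\nabla\cdot}\bm{v}-\pi_\tau^{\nabla\cdot}\bm{v}^\parallel\in\mathbb{P}^{\nabla\cdot}(\tau)$ through its conormal moments. For an edge $f\subset\partial\tau$ with $\tau=\tau_f^+$ the moment vanishes (again because $\bm{\nu}^\tau_f\perp\bm{\nu}_h|_\tau$); for an edge with $\tau=\tau_f^-$ the relations \eqref{Pidivtau}, \eqref{Pibardivtau} give the moment $-\int_f\bm{v}\cdot(\bm{\nu}_f^{\tau_f^+}+\bm{\nu}_f^{\tau_f^-})\,ds$, which by \eqref{nuplusminus}, Cauchy--Schwarz on $f$, and the scaled trace inequality is $\lesssim h_\tau\|\bm{v}\|_\tau+h_\tau^2|\bm{v}|_{H^1(\tau)}\lesssim h_\tau\|\bm{v}\|_{H^1(\tau)}$. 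The scaling equivalence then yields $\|\bm{w}\|_\tau\lesssim h_\tau\|\bm{v}\|_{H^1(\tau)}$, and combining with $\|\bm{v}-\bm{v}^\parallel\|_\tau=\|\bm{v}^\perp\|_\tau$ and \eqref{Pidiv} gives \eqref{divinterp}.

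The bounds \eqref{Pibound1}, \eqref{Pibound2} follow for $\bm{v}_h\in[\mathbb{P}_1(\tau)]^3$ by feeding \eqref{divinterp}, \eqref{curlinterp} into inverse estimates. For \eqref{Pibound1}, the inverse inequality $h_\tau|\bm{v}_h|_{H^1(\tau)}\lesssim\|\bm{v}_h\|_\tau$ together with $h_\tau\lesssim1$ absorb the interpolation error into $\|\bm{v}_h\|_\tau$, and a triangle inequality finishes. For \eqref{Pibound2}, write $\bar{\pi}_h^{\od}\bm{v}_h=\bm{w}+\pi_\tau^{\od}\bm{v}_h^\parallel$ on $\tau$, where $\bm{w}:=\bar{\pi}_h^{\od}\bm{v}_h-\pi_\tau^{\od}\bm{v}_h^\parallel$ is $0$ when $\od=\nabla\times$ and, when $\od=\nabla\cdot$, is the field analysed above, which now satisfies $\|\bm{w}\|_\tau\lesssim h_\tau\|\bm{v}_h\|_\tau$ after the inverse inequality; hence $\|\od_h\bm{w}\|_\tau\lesssim h_\tau^{-1}\|\bm{w}\|_\tau\lesssim\|\bm{v}_h\|_\tau$, while the commuting relation $\od_h\pi_\tau^{\od}\bm{v}_h^\parallel=\mathcal{Q}_\tau(\od_h\bm{v}_h^\parallel)$, with $\mathcal{Q}_\tau$ the $L^2(\tau)$-projection onto $\mathbb{P}_0(\tau)$ (a standard consequence of \eqref{Pidivtau} and integration by parts on $\tau$ against constants), gives $\|\od_h\pi_\tau^{\od}\bm{v}_h^\parallel\|_\tau\le\|\od_h\bm{v}_h^\parallel\|_\tau$; adding the two contributions proves \eqref{Pibound2}.

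I expect the only delicate point to be the edge term with $\tau=\tau_f^-$ in \eqref{divinterp}: since $\bar{\pi}_h^{\nabla\cdot}$ is built from the pre-assigned conormal $\bm{\nu}_f^{\tau_f^+}$ rather than the outward conormal $\bm{\nu}_f^{\tau_f^-}$, one picks up the mismatch $\bm{\nu}_f^{\tau_f^+}+\bm{\nu}_f^{\tau_f^-}$, and everything hinges on this being $O(h_f)$, which is exactly \eqref{nuplusminus} (itself a consequence of the geometric assumption \eqref{nunuh}). That extra power of $h$ is precisely what converts a bare $L^1(f)$-bound on $\bm{v}$ into an admissible $h_\tau$-weighted $H^1(\tau)$-bound after the trace inequality; when $\mathcal{M}_h$ is globally flat the mismatch vanishes, $\bar{\pi}_h^{\nabla\cdot}$ coincides with $\pi_h^{\nabla\cdot}$, and \eqref{divinterp}, \eqref{Pibound1}, \eqref{Pibound2} reduce to classical planar estimates. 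Everything else is routine affine scaling, inverse inequalities, and the commuting diagram.
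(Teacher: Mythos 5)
Your proposal is correct and follows essentially the same route as the paper's proof: split $\bm{v}=\bm{v}^\parallel+\bm{v}^\perp$ relative to $\bm{\nu}_h|_\tau$, observe that $\pi_h^{\nabla\cdot}\bm{v}=\pi_h^{\nabla\cdot}\bm{v}^\parallel$ and $\pi_h^{\nabla\times}\bm{v}=\pi_h^{\nabla\times}\bm{v}^\parallel$, invoke \eqref{Pidiv} for the tangential interpolation error, bound the mismatch $\bar{\pi}_h^{\nabla\cdot}-\pi_h^{\nabla\cdot}$ through \eqref{nuplusminus} together with an $L^1$-trace inequality (the paper states this directly as \eqref{PiPibar}, you arrive at the same estimate via the RT0 degree-of-freedom scaling equivalence), and finish \eqref{Pibound1}, \eqref{Pibound2} with inverse inequalities and the commuting property $\od_h\pi_\tau^{\od}=\mathcal{Q}_\tau\od_h$. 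The only surface difference is that you make the per-edge moment computation explicit where the paper summarizes it in one line; the underlying argument is identical.
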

\begin{proof}
Using $\bm{v}=\bm{v}^\perp+\bm{v}^\parallel$, $\pi_h^{\nabla\cdot}\bm{v}=\pi_h^{\nabla\cdot}(\bm{v}^\parallel)$, and the triangle inequality, we have
\begin{equation}\label{triangle}
    \|\bm{v}-\bar{\pi}_h^{\nabla\cdot}\bm{v}\|_\tau\leq \|\bm{v}^\perp\|_\tau+\|\bm{v}^\parallel-\pi_h^{\nabla\cdot}\bm{v}^\parallel\|_\tau+\|\pi_h^{\nabla\cdot}\bm{v}-\bar{\pi}_h^{\nabla\cdot}\bm{v}\|_\tau.
\end{equation}
For each 1-d face $f\subset\partial\tau$,
it follows from the trace inequality
\begin{equation*}
    \|\bm{v}\|_{L^1(f)}\lesssim\|\bm{v}\|_\tau+h_\tau\|\nabla_\tau\bm{v}\|_\tau
\end{equation*}
and $\bm{\nu}_f^{\tau^+_f}+\bm{\nu}_f^{\tau^-_f}=O(h_f)$ on $f$ that
\begin{equation}\label{PiPibar}
\|\pi_h^{\nabla\cdot}\bm{v}-\bar{\pi}_h^{\nabla\cdot}\bm{v}\|_\tau\lesssim h_\tau\|\bm{v}\|_{\tau}+h^2_\tau|\bm{v}|_{H^1(\tau)}.
\end{equation}
Therefore combining \eqref{triangle} with \eqref{Pidiv} and \eqref{PiPibar} leads to \eqref{divinterp}.

Similarly, using $\pi_h^{\nabla\times}\bm{v}=\pi_h^{\nabla\times}(\bm{v}^\parallel)$, the triangle inequality
\begin{equation*}
    \|\bm{v}-\pi_h^{\nabla\times}\bm{v}\|_\tau\leq \|\bm{v}^\perp\|_\tau+\|\bm{v}^\parallel-\pi_h^{\nabla\times}\bm{v}^\parallel\|_\tau,
\end{equation*}
and \eqref{Pidiv}, we obtain \eqref{curlinterp}. The bound \eqref{Pibound1} follows from a scaling argument. Using $\pi_h^{\od}\bm{v}_h=\pi_h^{\od}\bm{v}^\parallel_h$, an inverse inequality and \eqref{PiPibar}, we have
\begin{equation*}
\begin{aligned}
\|{\od}_h\bar{\pi}_h^{\od}\bm{v}_h\|_\tau&\leq\|{\od}_h\pi_h^{\od}\bm{v}_h\|_\tau+\|{\od}_h(\bar{\pi}_h^{\od}-\pi_h^\text{d})\bm{v}_h\|_\tau\\
&\lesssim\|{\od}_h\bm{v}^\parallel_h\|_\tau+h_\tau^{-1}\|(\bar{\pi}_h^{\od}-\pi_h^\text{d})\bm{v}_h\|_\tau\\
&\lesssim\|{\od}_h\bm{v}_h^\parallel\|_\tau+\|\bm{v}_h\|_\tau
\end{aligned}
\end{equation*}
and verify \eqref{Pibound2}. The proof is complete.
\qed\end{proof}

\subsection{Surface Piola transformations}
In the following, we describe the Piola transformation between the smooth surface $\mathcal{M}$ and the discrete surface $\mathcal{M}_h.$ Define
\begin{align*}
    H:=\nabla\bm{\nu},\quad P:=I-\bm{\nu}\otimes\bm{\nu},\quad P_h:=I-\bm{\nu}_h\otimes\bm{\nu}_h,
\end{align*}
where $I$ is the identity mapping. The restriction $a|_{\mathcal{M}_h}: \mathcal{M}_h\rightarrow\mathcal{M}$ of the projection $a: \mathcal{U}\rightarrow\mathcal{M}$ is bijective. With slight abuse of notation, we simply denote $a=a|_{\mathcal{M}_h}$ such that the inverse $a^{-1}: \mathcal{M}\rightarrow\mathcal{M}_h$ exists.

Let $x\in\mathcal{M}_h$ and $\mu_h$ be the density function on $\mathcal{M}_h$ such that $d\sigma(a(x))=\mu_h(x)d\sigma_h(x)$. It is shown in \cite{DemlowDziuk2007} that  surface gradients are related as 
\begin{equation}\label{gradMMh}
\begin{aligned}
\nabla_{\mathcal{M}_h}w_h(x)&=P_h(x)[I-\delta H](x)P(a(x))\nabla_{\mathcal{M}}\tilde{w}_h(a(x)),\\
\nabla_{\mathcal{M}}\tilde{w}_h(a(x))&=[I-\delta H]^{-1}(x)\left(I-\frac{\bm{\nu}_h\otimes\bm{\nu}}{\bm{\nu}_h\cdot\bm{\nu}}\right)\nabla_{\mathcal{M}_h}w_h(x),
\end{aligned}
\end{equation}
where $w_h$ is a function on $\mathcal{M}_h,$ and 
$\tilde{w}_h:=w_h\circ a^{-1}$ is the lifting  on $\mathcal{M}$.
Let $\bm{v}_h$ and $\bm{v}$ be  tangential vector fields on $\mathcal{M}_h$ and $\mathcal{M},$ respectively. Using \eqref{gradM1M2},  \eqref{PiolaM12}, \eqref{Piolainverse}, \eqref{gradMMh}, we obtain the following surface Piola  transformation
\begin{subequations}
\begin{align}
    (\mathcal{P}^{\nabla\times}_a\bm{v}_h)(a(x))&=[I-\delta H]^{-1}(x)\left(I-\frac{\bm{\nu}_h\otimes\bm{\nu}}{\bm{\nu}_h\cdot\bm{\nu}}\right)\bm{v}_h(x),\label{Pacurl}\\
    (\mathcal{P}^{\nabla\times}_{a^{-1}} \bm{v})(x)&=P_h(x)[I-\delta H](x)\bm{v}(a(x)),\label{Painversecurl}\\
    (\mathcal{P}^{\nabla\cdot}_a \bm{v}_h)(a(x))&=\frac{1}{\mu_h(x)}P(a(x))[I-\delta H](x)\bm{v}_h(x),\\
     (\mathcal{P}^{\nabla\cdot}_{a^{-1}} \bm{v})(x)&=\mu_h(x)\left(I-\frac{\bm{\nu}\otimes\bm{\nu}_h}{\bm{\nu}_h\cdot\bm{\nu}}\right)[I-\delta H]^{-1}(x)\bm{v}(a(x)),\label{Painversediv}
\end{align}
\end{subequations}
see Figure \ref{Piolafigure} for the illustration.
\begin{figure}[tbhp]
\centering
\includegraphics[width=9cm,height=5.5cm]{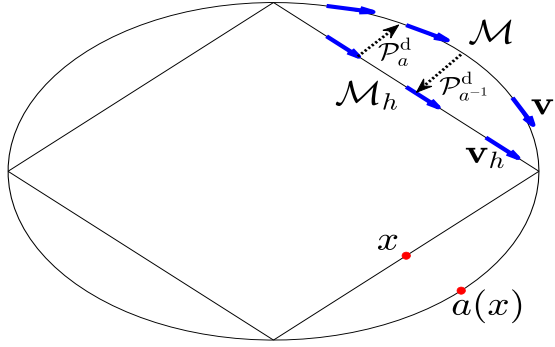}
\caption{Piola transformation between $\mathcal{M}$ and $\mathcal{M}_h$.}
\label{Piolafigure}
\end{figure}
We next present properties of Piola transforms.
\begin{lemma}\label{Piolaproperty}
Let $\bm{v}\in L^2(T\mathcal{M})$ be a tangential vector field on $\mathcal{M}$. We have
\begin{equation}\label{L2bound}
    |\mathcal{P}^{\od}_{a^{-1}}\bm{v}(x)-\bm{v}^\ell(x)|\lesssim h_\tau|\bm{v}(a(x))|,\quad\forall x\in\tau,~\forall\tau\in\mathcal{T}_h.
\end{equation}
In addition, for $\tau\in\mathcal{T}_h$, $\bm{v}_h\in L^2(T\tau)$, $\bm{v}\in H^1(Ta(\tau))$, $x\in\tau,$ we have
\begin{subequations}\label{L2H1Pbound}
\begin{align}
|\mathcal{P}^{\od}_{a}\bm{v}_h(a(x))|&\lesssim|\bm{v}_h(x)|,\quad|\mathcal{P}^{\od}_{a^{-1}}\bm{v}(x)|\lesssim|\bm{v}(a(x))|,\label{L2Pbound}\\
|\nabla_\tau\mathcal{P}^{\od}_{a^{-1}}\bm{v}(x)|&\lesssim|\bm{v}(a(x))|+ |\nabla\bm{v}(a(x))|,\label{H1Pbound}\\
|\nabla\mathcal{P}^{\od}_a\bm{v}_h(a(x))|&\lesssim|\bm{v}_h(x)|+ |\nabla_h\bm{v}_h(x)|.
\end{align}
\end{subequations}
\end{lemma}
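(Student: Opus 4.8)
The plan is to derive everything from the explicit formulas \eqref{Pacurl}--\eqref{Painversediv} together with the smallness estimates \eqref{nunuh}. The key structural observation is that each of the four Piola maps between $\mathcal{M}$ and $\mathcal{M}_h$ has the form $\bm{w}\mapsto M(x)\bm{w}$ for a matrix field $M$ that differs from the relevant tangential projection (or identity) by a term of size $O(h_\tau)$; this follows because $\delta=O(h_\tau)$, $H=\nabla\bm{\nu}$ is smooth and hence bounded on the compact $\mathcal{M}$, $|\bm{\nu}-\bm{\nu}_h|=O(h_\tau)$, and $\bm{\nu}\cdot\bm{\nu}_h\geq c>0$ uniformly (so $[I-\delta H]^{\pm1}$ and $(\bm{\nu}_h\cdot\bm{\nu})^{-1}$ are bounded and Lipschitz). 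Concretely, one writes $[I-\delta H]^{-1}=I+\delta H + O(h_\tau^2)=I+O(h_\tau)$ and $I-(\bm{\nu}_h\otimes\bm{\nu})/(\bm{\nu}_h\cdot\bm{\nu}) = P + O(h_\tau)$, where the $O(h_\tau)$ is in the operator norm uniformly on $\tau$; the density $\mu_h$ satisfies $\mu_h=1+O(h_\tau)$ by the same argument (it is the Jacobian of $a|_{\mathcal{M}_h}$, cf.~\cite{DemlowDziuk2007}).

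For \eqref{L2bound}: since $\bm{v}$ is tangential to $\mathcal{M}$, $P(a(x))\bm{v}(a(x))=\bm{v}(a(x))=\bm{v}^\ell(x)$, so writing the matrix in \eqref{Painversecurl} (for $\od=\nabla\times$) or \eqref{Painversediv} (for $\od=\nabla\cdot$) as $P(a(x))+E(x)$ with $|E(x)|\lesssim h_\tau$ immediately gives $|\mathcal{P}^{\od}_{a^{-1}}\bm{v}(x)-\bm{v}^\ell(x)|=|E(x)\bm{v}(a(x))|\lesssim h_\tau|\bm{v}(a(x))|$; one must also absorb the $\mu_h$-factor and the $P_h$ versus $P$ discrepancy in the divergence case, which is again $O(h_\tau)$. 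The bound \eqref{L2Pbound} is even simpler: all four matrix fields are uniformly bounded, so $|\mathcal{P}^{\od}_a\bm{v}_h(a(x))|\lesssim|\bm{v}_h(x)|$ and $|\mathcal{P}^{\od}_{a^{-1}}\bm{v}(x)|\lesssim|\bm{v}(a(x))|$ pointwise. For the gradient estimates \eqref{H1Pbound} and its companion, I would differentiate the product $M(x)\bm{v}(a(x))$ (resp.~$M(x)\bm{v}_h(x)$) by the Leibniz rule: $\nabla_\tau(M\,(\bm{v}\circ a))= (\nabla_\tau M)(\bm{v}\circ a) + M\,\nabla_\tau(\bm{v}\circ a)$. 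The factor $M$ and its surface derivative $\nabla_\tau M$ are bounded (the latter because $\delta$, $H$, $\bm{\nu}$, $\bm{\nu}_h$, $\mu_h$ are all Lipschitz with the relevant constants controlled by $\mathcal{M}$ and mesh quality — note $\bm{\nu}_h$ is piecewise constant so contributes nothing to $\nabla_\tau$ inside a fixed $\tau$), and $|\nabla_\tau(\bm{v}\circ a)|\lesssim|\nabla\bm{v}(a(x))|$ by the chain rule since $a$ has bounded Jacobian. Collecting terms yields $|\nabla_\tau\mathcal{P}^{\od}_{a^{-1}}\bm{v}(x)|\lesssim|\bm{v}(a(x))|+|\nabla\bm{v}(a(x))|$, and symmetrically for $\mathcal{P}^{\od}_a$.

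The main obstacle is purely bookkeeping rather than conceptual: one has to track the four cases ($\od=\nabla\times$ and $\od=\nabla\cdot$, each with the map and its inverse) and verify that every auxiliary quantity — $[I-\delta H]^{\pm1}$, $(\bm{\nu}_h\cdot\bm{\nu})^{-1}$, $\mu_h^{\pm1}$, the projections $P$, $P_h$ — together with its first surface derivative is bounded by a constant depending only on $\mathcal{M}$, $r$, and the local mesh quality, with the deviation from the "ideal" matrix ($P$ or $I$) being $O(h_\tau)$; the smallness \eqref{nunuh} is exactly what makes these uniform in $h$. I would state this as a preliminary observation (a one-line "matrix lemma") and then read off all four inequalities in \eqref{L2bound}--\eqref{H1Pbound} as corollaries, so that the write-up stays short.
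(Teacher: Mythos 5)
Your proposal is correct and follows essentially the paper's argument: both read the estimates off the explicit Piola formulas \eqref{Pacurl}--\eqref{Painversediv}, using $\delta=O(h_\tau)$, $|\bm{\nu}-\bm{\nu}_h|=O(h_\tau)$, and $P\bm{v}^\ell=\bm{v}^\ell$ for tangential $\bm{v}$ to expose an $O(h_\tau)$ error matrix for \eqref{L2bound}, and uniform boundedness of the transformation matrices (and, for the gradient bounds, of their surface derivatives) for \eqref{L2H1Pbound}. The only differences are cosmetic: the paper carries out the computation explicitly only for $\mathcal{P}^{\nabla\times}_{a^{-1}}$, citing \cite{CockburnDemlow2016} for the $\nabla\cdot$ case of \eqref{L2bound}, and asserts \eqref{L2H1Pbound} as an immediate consequence of the definitions without the Leibniz-rule bookkeeping you spell out.
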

\begin{proof}
In the case that $\text{d}=\nabla\cdot$, \eqref{L2bound} is derived in \cite{CockburnDemlow2016}. Given $\tau\in\mathcal{T}_h$ and a point $x\in\tau,$ using the formula \eqref{Painversecurl} and that $\bm{v}^\ell(x)\perp\bm{\nu}(x)$, we have  
\begin{equation}\label{tempcurl}
\begin{aligned}
&(\mathcal{P}^{\nabla\times}_{a^{-1}}\bm{v})(x)-\bm{v}^\ell(x)=P_h(x)[I-\delta H](x)\bm{v}(a(x))-\bm{v}(a(x))\\
&=\big[P_h(x)-\delta(x)P_h(x)H(x)-P(x)\big]\bm{v}(a(x))\\
&=\big[\bm{\nu}(x)\otimes\bm{\nu}(x)-\bm{\nu}_h(x)\otimes\bm{\nu}_h(x)-\delta(x)P_h(x)H(x)\big]\bm{v}(a(x)).
\end{aligned}
\end{equation}
Combining \eqref{tempcurl} with \eqref{nunuh} yields
\begin{equation}
    |(\mathcal{P}^{\nabla\times}_{a^{-1}}\bm{v})(x)-\bm{v}^\ell(x)|\lesssim h_\tau|\bm{v}(a(x))|.
\end{equation}
The estimates \eqref{L2H1Pbound} follows from the definitions \eqref{Pacurl}--\eqref{Painversediv}.
\qed\end{proof}

For $f\in\mathcal{F}_h,$ along the interface $a(f)\subset\mathcal{M}$ let $\llbracket \bullet\rrbracket_{t,a(f)}$, $\llbracket \bullet\rrbracket_{\nu,a(f)}$ be the tangential and conormal jumps  defined in the same fashion as $\llbracket \bullet\rrbracket_{t,f}$, $\llbracket \bullet\rrbracket_{\nu,f}$, respectively. The Piola transformation preserves tangential and conormal continuity of vector fields across interfaces. Given $\bm{v}_h\in H_h^1(T\mathcal{M}_h)$, we have
\begin{align*}
 &\llbracket \bm{v}_h\rrbracket_{t,f}=0\Longrightarrow\llbracket\mathcal{P}^{\nabla\times}_a\bm{v}_h\rrbracket_{t,a(f)}=0,\quad\llbracket \bm{v}_h\rrbracket_{\nu,f}=0\Longrightarrow\llbracket\mathcal{P}^{\nabla\cdot}_a\bm{v}_h\rrbracket_{\nu,a(f)}=0.
\end{align*}
Combining this fact and Lemma \ref{Piolaproperty} leads to
\begin{equation}\label{Pad}
    \mathcal{P}^{\rm d}_a(H_h(\od_h))\subset H(\od).
\end{equation}
Similarly, 
for $\bm{v}\in H^1(T\mathcal{M})$ and $f\in\mathcal{F}_h$ we have 
\begin{equation}\label{Padinv}
\begin{aligned}
&\mathcal{P}^{\od}_{a^{-1}}\bm{v}\in H_h^1(T\mathcal{M}_h),\\
&\llbracket\mathcal{P}^{\nabla\times}_{a^{-1}}\bm{v}\rrbracket_{t,f}=0,\quad\llbracket\mathcal{P}^{\nabla\cdot}_{a^{-1}}\bm{v}\rrbracket_{\nu,f}=0.
\end{aligned}
\end{equation}

\section{Preconditioning in H(curl) and H(div) on surfaces}\label{secHX}
In this section, we develop efficient nodal auxiliary space preconditioners for the discrete operator $A_h^{\text{d}}$, where the auxiliary space builds upon the surface nodal element space $H_h(\nabla_h)$, equipped with the inner product 
\begin{equation*}
    \langle A_h^\nabla v_h,w_h\rangle=(v_h,w_h)_{A_h^\nabla}=(\nabla_hv_h,\nabla_hw_h)_h+c(v_h,w_h)_h.
\end{equation*}
On a surface $\mathcal{M}_\alpha$ with $\alpha$ being suppressed or $\alpha=h,$ let $$\|\bullet\|_{H(\od_\alpha)}^2:=\|\bullet\|_{L^2(\mathcal{M}_\alpha)}^2+\|\od_\alpha\bullet\|_{L^2(\mathcal{M}_\alpha)}^2.$$ 
We consider the space of harmonic tangential vector fields on $\mathcal{M}$
\begin{equation*}
    \mathcal{H}(\text{d}):=\big\{\bm{u}\in L^2(T\mathcal{M}): \text{d}\bm{u}=0,~(\text{d}^-)^*\bm{u}=0\big\},
\end{equation*}
where $\text{d}^*$ is the $L^2(\mathcal{M})$-adjoint of $\text{d}$. 
The next lemma deals with the Hodge decomposition of vector fields on smooth surfaces.
\begin{lemma}[Hodge decomposition]\label{Hodge}
For any $\bm{v}\in H(\od)$, there exist $\bm{\phi}\in H^1(T\mathcal{M})$ and $p\in H(\od^-)$ such that
\begin{equation}\label{vsplittingeqn}
    \begin{aligned}
    &\bm{v}=\bm{\phi}+{\od}^-p,\\
    &\|\bm{\phi}\|_{L^2(\mathcal{M})}+\|p\|_{H(\od^-)}\lesssim\|\bm{v}\|_{L^2(\mathcal{M})},\\
    &\|\bm{\phi}\|_{H^1(\mathcal{M})}\lesssim\|\bm{v}\|_{L^2(\mathcal{M})}+\|{\od}\bm{v}\|_{L^2(\mathcal{M})}.
\end{aligned}
\end{equation}
In addition, we have $p\in C^0(\mathcal{M})\text{ when }\bm{v}\in L^s(T\mathcal{M})$ with $s>2.$
\end{lemma}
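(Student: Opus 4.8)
The plan is to prove the Hodge decomposition on the smooth closed surface $\mathcal{M}$ by invoking the classical Hodge theory for the de Rham complex \eqref{2ddeRham}, translated into the vector-proxy language used here. First I would recall that on a compact oriented Riemannian surface without boundary, $L^2(T\mathcal{M})$ admits the orthogonal Hodge decomposition
\begin{equation*}
L^2(T\mathcal{M}) = \od^-\big(H(\od^-)\big) \oplus (\od)^*\big(H((\od)^*)\big) \oplus \mathcal{H}(\od),
\end{equation*}
where the three summands are $L^2$-orthogonal, and each is closed. Given $\bm{v}\in H(\od)$, write $\bm{v} = \od^- p + \bm{\phi}$ where $\bm{\phi}$ lies in the $L^2$-orthogonal complement of $\od^-(H(\od^-))$, i.e. $\bm{\phi}\in N(\od)$ in the distributional sense plus the harmonic part; more precisely $\bm{\phi} = (\od)^*\bm{\psi} + \bm{h}$ with $\bm{h}\in\mathcal{H}(\od)$. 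The point is that $p\in H(\od^-)$ and $\bm{\phi}\in N((\od^-)^*)$, and since $\od\bm{v}\in L^2$ and $\od\,\od^- = 0$, we get $\od\bm{\phi} = \od\bm{v}\in L^2$.

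The next step is the regularity upgrade: $\bm{\phi}\in H^1(T\mathcal{M})$. Here I would use elliptic regularity for the Hodge Laplacian on the smooth compact surface. Since $\bm{\phi}$ satisfies $\od\bm{\phi} = \od\bm{v}\in L^2(\mathcal{M})$ and $(\od^-)^*\bm{\phi} = 0$ (it is orthogonal to the exact part and to nothing else relevant), $\bm{\phi}$ solves a first-order div–curl type system with $L^2$ data and, on a smooth closed surface, this yields $\bm{\phi}\in H^1$ with
\begin{equation*}
\|\bm{\phi}\|_{H^1(\mathcal{M})} \lesssim \|\od\bm{\phi}\|_{L^2(\mathcal{M})} + \|(\od^-)^*\bm{\phi}\|_{L^2(\mathcal{M})} + \|\bm{\phi}\|_{L^2(\mathcal{M})} \lesssim \|\od\bm{v}\|_{L^2(\mathcal{M})} + \|\bm{v}\|_{L^2(\mathcal{M})},
\end{equation*}
the last inequality using $L^2$-stability of the Hodge projections (so $\|\bm{\phi}\|_{L^2}\le\|\bm{v}\|_{L^2}$) together with orthogonality. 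Similarly the Poincaré inequality \eqref{Poincare} applied to the component of $p$ in $N(\od^-)^\perp$ controls $\|p\|_{H(\od^-)} = \|p\|_{L^2} + \|\od^- p\|_{L^2}\lesssim\|\od^- p\|_{L^2}\le\|\bm{v}\|_{L^2}$ (after normalizing $p$ to be orthogonal to constants, which does not change $\od^- p$), which gives the second line of \eqref{vsplittingeqn}.

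For the final assertion — that $p\in C^0(\mathcal{M})$ when $\bm{v}\in L^s(T\mathcal{M})$ with $s>2$ — I would note that $p$ can be taken (modulo constants) to solve $\Delta_\mathcal{M} p = \od\,\od^- p = \od(\bm{v} - \bm{\phi})$; but it is cleaner to use that $\od^- p$ is the $L^2$-projection of $\bm{v}$ onto the exact subspace, hence $\|\od^- p\|_{L^s}\lesssim\|\bm{v}\|_{L^s}$ by $L^p$-boundedness of the Hodge/Leray projector on a compact manifold (Calderón–Zygmund theory), so $\nabla_\mathcal{M} p\in L^s$ with $s>2$; then Morrey's embedding $W^{1,s}(\mathcal{M})\hookrightarrow C^0(\mathcal{M})$ on the $2$-dimensional surface gives continuity of $p$.

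The main obstacle is marshalling the elliptic regularity and $L^p$-boundedness results for the surface Hodge Laplacian in a self-contained way; these are standard on smooth compact manifolds but need to be cited precisely (e.g. Schwarz's monograph on Hodge decomposition, or the de Rham complex regularity theory), and one must be careful that the vector-proxy operators $\od^-,\od,(\od^-)^*$ correspond exactly to $d,\delta$ on $1$-forms so that the abstract Hodge theorem applies verbatim. The bookkeeping of which orthogonal complement $\bm{\phi}$ and $p$ live in — and the harmless normalization of $p$ modulo $\mathcal{H}(\od^-)=\{\text{constants}\}$ — also requires a little care but is routine.
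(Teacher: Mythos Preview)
Your proposal is correct and follows essentially the same route as the paper: the $L^2$-orthogonal Hodge decomposition, the observation that $(\od^-)^*\bm{\phi}=0$ and $\od\bm{\phi}=\od\bm{v}$, the Gaffney (div--curl) inequality for the $H^1$ bound on $\bm{\phi}$, and the Poincar\'e inequality \eqref{Poincare} for $p$ after normalizing in $N(\od^-)^\perp$. The only notable difference is in the continuity argument: you invoke $L^p$-boundedness of the Hodge projector via Calder\'on--Zygmund theory to get $\od^- p\in L^s$, whereas the paper avoids this heavier tool by observing that the already-established $\bm{\phi}\in H^1(T\mathcal{M})$ embeds into $L^s(T\mathcal{M})$ for all $s<\infty$ on the 2-surface, so $\od^- p=\bm{v}-\bm{\phi}\in L^s$ directly, and then $p\in W^{1,s}(\mathcal{M})\hookrightarrow C^0(\mathcal{M})$ by Morrey. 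Both arguments work; the paper's is more elementary.
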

\begin{proof}
The $L^2(\mathcal{M})$-orthogonal Hodge decomposition (cf.~\cite{Schwarz1995,ArnoldFalkWinther2006}) of $\bm{v}$ reads
\begin{equation}\label{L2decomp}
    \bm{v}=\bm{q}+{\od^*}r+{\od^-}p,
\end{equation}
where  $\bm{q}\in\mathcal{H}(\od)$,  $r$ is in the domain of $\text{d}^*$, and $p\in N(\od^-)^\perp\subset H(\od^-)$ with
\begin{equation}\label{pl2}
    \|p\|_{L^2(\mathcal{M})}\leq c_P\|\od^-p\|_{L^2(\mathcal{M})}.
\end{equation}
Let $\bm{\phi}:=\bm{q}+{\od^*}r.$
The $L^2$-orthogonality of \eqref{L2decomp} implies
\begin{equation}\label{phil2}
    \|\text{d}^-p\|_{L^2(\mathcal{M})}+\|\bm{\phi}\|_{L^2(\mathcal{M})}\leq2\|\bm{v}\|_{L^2(\mathcal{M})}.
\end{equation}
Using $(\text{d}^-)^*\circ\text{d}^*=0,$ $\text{d}\bm{\phi}=\text{d}\bm{v}$ and the Gaffney inequality (cf.~\cite{Schwarz1995,ArnoldFalkWinther2006,Gaffney1951}) on $\mathcal{M}$, we have $\bm{\phi}\in H^1(T\mathcal{M})$ and
\begin{equation}\label{Gaffney}
\begin{aligned}
\|\bm{\phi}\|_{H^1(\mathcal{M})}&\lesssim\|\bm{\phi}\|_{L^2(\mathcal{M})}+\|\text{d}\bm{\phi}\|_{L^2(\mathcal{M})}+\|(\text{d}^-)^*\bm{\phi}\|_{L^2(\mathcal{M})}\\
&=\|\bm{\phi}\|_{L^2(\mathcal{M})}+\|\text{d}\bm{v}\|_{L^2(\mathcal{M})}.
\end{aligned}
\end{equation}
Collecting \eqref{L2decomp}--\eqref{Gaffney} finishes the proof of \eqref{vsplittingeqn}.

Due to the Sobolev embedding, it holds that $\bm{\phi}\in H^1(T\mathcal{M})\hookrightarrow L^s(T\mathcal{M})$ and $p\in H(\od^-)=H^1(\mathcal{M})\hookrightarrow L^s(\mathcal{M})$ with $s>2$. Hence we have $\od^-p=\bm{v}-\bm{\phi}\in L^s(T\mathcal{M})$ and thus $p\in W^{1,s}(\mathcal{M})\hookrightarrow C^0(\mathcal{M}).$ 
\qed\end{proof}
On a non-smooth polygonal surface, Hodge decompositions of vector fields could be found in \cite{BuffaCiarlet2001}.

In a Euclidean space, the classical HX preconditioner \cite{HiptmairXu2007} utilizes the space of globally continuous and piecewise linear vector fields. In differential geometry, Euclidean vector fields are intrinsically generalized as tangential vector fields on a smooth manifold. However, any vector field  tangential to a triangulated surface $\mathcal{M}_h$ cannot be continuous, see Figure \ref{tangentialfigure}. Therefore we relax the tangential condition and make use of the space $[H_h(\nabla_h)]^3$ of all continuous and non-tangential piecewise linear vector fields  on $\mathcal{M}_h$ as an auxiliary space. 
Now we are in a position to prove the main result for preconditioning.
\begin{theorem}\label{vhsplitting}
For any $\bm{v}_h\in H_h(\od_h),$ there exist $\tilde{\bm{v}}_h\in H_h(\od_h),$ $\bm{\phi}_h\in[H_h(\nabla_h)]^3$ and $p_h\in H_h(\od^-_h)$ such that
\begin{equation*}
\begin{aligned}
    &\bm{v}_h=\tilde{\bm{v}}_h+\bar{\pi}^{\od}_h\bm{\phi}_h+\od_h^-p_h,\\
    &\|h^{-1}\tilde{\bm{v}}_h\|^2+\|\bm{\phi}_h\|^2_{A_h^\nabla}+c\|p_h\|^2_{H(\od^-_h)}\lesssim \|\bm{v}_h\|^2_{A_h^{\od}}.
\end{aligned}
\end{equation*}
\end{theorem}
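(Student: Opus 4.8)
The plan is to transfer the continuous Hodge decomposition of Lemma~\ref{Hodge} to the discrete surface $\mathcal{M}_h$ via the Piola transform $\mathcal{P}^{\od}_a$, apply the decomposition on $\mathcal{M}$, and then pull the pieces back, using the interpolation operators $\bar{\pi}_h^{\od}$, $\opi_h^{\text{d}^-}$ and the estimates of Lemmas~\ref{Pidinterp} and \ref{Piolaproperty} to control the resulting terms. The high-frequency remainder $\tilde{\bm{v}}_h$ will be the difference between $\bm{v}_h$ and the interpolated pieces, and its $h^{-1}$-weighted $L^2$ bound will come from approximation estimates together with an inverse inequality.

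First I would set $\bm{v}:=\mathcal{P}^{\od}_a\bm{v}_h\in H(\od)$, which is legitimate by \eqref{Pad}, and record $\|\bm{v}\|_{L^2(\mathcal{M})}\lesssim\|\bm{v}_h\|_{L^2(\mathcal{M}_h)}$ and $\|\od\bm{v}\|_{L^2(\mathcal{M})}\lesssim\|\od_h\bm{v}_h\|_{L^2(\mathcal{M}_h)}$ using the commuting property \eqref{Piolacommuting} and the pointwise bounds in Lemma~\ref{Piolaproperty} (one must check the Jacobian factors relating $d\sigma$ and $d\sigma_h$, which are $1+O(h)$). Next, apply Lemma~\ref{Hodge} to obtain $\bm{v}=\bm{\phi}+\od^-p$ with $\bm{\phi}\in H^1(T\mathcal{M})$, $p\in H(\od^-)$, and the stability bounds; here the higher integrability $p\in C^0(\mathcal{M})$ is needed so that the nodal interpolant $\opi_h^{\text{d}^-}$ of the pulled-back potential makes sense. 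Now pull back: define $p_h:=\opi_h^{\text{d}^-}(p\circ a)\in H_h(\od_h^-)$ and take $\bm{\phi}_h\in[H_h(\nabla_h)]^3$ to be the componentwise Cl\'ement interpolant $\mathcal{I}_h$ of $\mathcal{P}^{\od}_{a^{-1}}\bm{\phi}$ (an $H^1$ non-tangential field on $\mathcal{M}_h$ by \eqref{Padinv} and \eqref{H1Pbound}), and set
\[
\tilde{\bm{v}}_h:=\bm{v}_h-\bar{\pi}_h^{\od}\bm{\phi}_h-\od_h^-p_h.
\]

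For the estimates: $\|\bm{\phi}_h\|_{A_h^\nabla}\lesssim\|\bm{\phi}_h\|_{H^1(\mathcal{M}_h)}$ is bounded via the $L^2$- and $H^1$-stability of $\mathcal{I}_h$ in \eqref{Ihl2}--\eqref{Ihh1} and the $H^1$-Piola bound \eqref{H1Pbound}, giving $\lesssim\|\bm{\phi}\|_{H^1(\mathcal{M})}\lesssim\|\bm{v}\|_{L^2}+\|\od\bm{v}\|_{L^2}\lesssim\|\bm{v}_h\|_{A_h^{\od}}$ (absorbing the $\sqrt{c}\|\bm{\phi}_h\|_{L^2}$ contribution with $c_0$). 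Similarly $c\|p_h\|_{H(\od_h^-)}^2$ is controlled by the stability of nodal interpolation against $\|p\|_{H(\od^-)}\lesssim\|\bm{v}\|_{L^2}$ together with \eqref{Pihcommute} to handle $\od_h^-p_h$. The main work is the high-frequency term: writing $\tilde{\bm{v}}_h=(\bm{v}_h-\bar\pi_h^{\od}\bm{\psi}_h)+(\bar\pi_h^{\od}\bm{\psi}_h-\bar\pi_h^{\od}\bm{\phi}_h)+(\od_h^-(q_h-p_h))$ where $\bm{\psi}_h:=\mathcal{P}^{\od}_{a^{-1}}\bm{\phi}$ and $q_h:=p\circ a$ (so that $\bm{v}_h$ is close to $\bar\pi_h^{\od}(\bm{\psi}_h+\od_h^- q_h)$ by the commuting/near-identity properties of the Piola pullback plus \eqref{L2bound}), one estimates $\|h^{-1}(\bm{v}_h-\bar\pi_h^{\od}\bm{v}_h)\|$-type quantities elementwise using \eqref{divinterp}/\eqref{curlinterp}: crucially the normal component $\bm{\psi}_h^\perp=O(h)|\bm{\phi}|$ by Lemma~\ref{Piolaproperty}, so $\|h^{-1}\bm{\psi}_h^\perp\|_\tau\lesssim\|\bm{\phi}\|_{H^1}$ locally, while the tangential-gradient term contributes $\lesssim|\bm{\phi}|_{H^1}$; the interpolation-error term $\bar\pi_h^{\od}(\bm{\psi}_h-\bm{\phi}_h)$ is handled by \eqref{Pibound1} and the $L^2$ approximation property of $\mathcal{I}_h$ scaled by $h^{-1}$, namely $h^{-1}\|\bm{\psi}_h-\mathcal{I}_h\bm{\psi}_h\|\lesssim|\bm{\psi}_h|_{H^1}\lesssim\|\bm{\phi}\|_{H^1}$; and $\od_h^-(q_h-p_h)=\od_h^-(q_h-\opi_h^{\text{d}^-}q_h)$ which via \eqref{Pihcommute} equals $(I-\opi_h^{\od})\od_h^- q_h$, bounded in $h^{-1}\|\cdot\|$ by $|\od_h^- q_h|_{H^1}\lesssim\|\bm{\phi}\|_{H^1}+\|\od\bm{v}\|_{L^2}$. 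Summing over $\tau$ and using $\|\bm{\phi}\|_{H^1(\mathcal{M})}\lesssim\|\bm{v}_h\|_{A_h^{\od}}$ closes the estimate. The hardest part will be bookkeeping the various near-identity perturbations of the Piola transform (the $O(h)$ mismatches between $\mathcal{P}^{\od}_a$, the identity, and the conormal/normal geometry encoded in \eqref{nunuh}--\eqref{nuplusminus}) and verifying that each such perturbation, after division by $h$, still yields an $H^1$-seminorm bound rather than a divergent one — this is exactly where the structural estimates $\|\bm{v}^\perp\|_\tau\lesssim h_\tau|\bm{v}|$ and the conormal cancellation $\bm{\nu}_f^{\tau_f^+}+\bm{\nu}_f^{\tau_f^-}=O(h_f)$ must be invoked with care.
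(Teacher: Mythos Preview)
Your overall strategy---push $\bm{v}_h$ to $\mathcal{M}$ via $\mathcal{P}_a^{\od}$, Hodge-decompose there, pull back, interpolate---matches the paper's. But there is a genuine gap in your treatment of $p_h$. You set $p_h:=\opi_h^{\text{d}^-}(p\circ a)$ and then claim that $c\|p_h\|^2_{H(\od_h^-)}$ is ``controlled by the stability of nodal interpolation against $\|p\|_{H(\od^-)}$''. No such stability is available: the Lagrange nodal interpolant is neither $L^2$- nor $H^1$-stable in two dimensions, and Lemma~\ref{Hodge} supplies only $\|p\|_{H^1(\mathcal{M})}\lesssim\|\bm{v}\|_{L^2(\mathcal{M})}$ (the $C^0$ regularity of $p$ is established there only qualitatively, to make point evaluations meaningful, with no quantitative $W^{1,s}$ bound). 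Hence $\|\opi_h^{\text{d}^-}(p\circ a)\|$ cannot be controlled by $\|p\|_{H^1}$, and your $L^2$ bound on $p_h$ fails. The paper fixes this by \emph{not} keeping the nodal interpolant: it replaces it by $p_h\in N(\od_h^-)^\perp$ with the same image $\od_h^- p_h=\od_h^-\opi_h^{\text{d}^-}(p\circ a)$, so that the discrete Poincar\'e inequality~\eqref{discretePoincare} gives $\|p_h\|\lesssim\|\od_h^- p_h\|$; then $\|\od_h^- p_h\|$ is bounded by invoking $(I-\opi_h^{\od})\bm{v}_h=0$, which converts $(I-\opi_h^{\od})\od_h^-(p\circ a)$ into $-(I-\opi_h^{\od})\mathcal{P}^{\od}_{a^{-1}}\bm{\phi}$, controllable by $\|\bm{\phi}\|_{H^1}$. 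You allude to this cancellation in your $\tilde{\bm{v}}_h$ analysis but never deploy it for $p_h$, where it is essential.

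A secondary issue: you take $\bm{\phi}_h=\mathcal{I}_h(\mathcal{P}^{\od}_{a^{-1}}\bm{\phi})$, but $\mathcal{P}^{\od}_{a^{-1}}\bm{\phi}\in H_h^1(T\mathcal{M}_h)$ is only \emph{piecewise} $H^1$ (it is tangentially or conormally continuous across faces by~\eqref{Padinv}, not componentwise continuous), so the Cl\'ement $H^1$-stability~\eqref{Ihh1} does not apply directly. The paper interpolates instead the globally $H^1$ extension $\bm{\phi}^\ell=\bm{\phi}\circ a\in[H^1(\mathcal{M}_h)]^3$, and then uses $|\mathcal{P}^{\od}_{a^{-1}}\bm{\phi}-\bm{\phi}^\ell|=O(h)|\bm{\phi}|$ from~\eqref{L2bound} to pass between the two when estimating $\tilde{\bm{v}}_h=\bar\pi_h^{\od}(\mathcal{P}^{\od}_{a^{-1}}\bm{\phi}-\mathcal{I}_h\bm{\phi}^\ell)$.
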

\begin{proof}
Given $\bm{v}_h\in H_h(\text{d}_h),$ we have $\mathcal{P}^\text{d}_a\bm{v}_h\in H(\text{d})$ by \eqref{Pad}. The Hodge decomposition of $\mathcal{P}_a^{\text{d}}\bm{v}_h\in H(\text{d})$ on the smooth $\mathcal{M}$ in Lemma \ref{Hodge} reads
\begin{equation}\label{Padvh}
    \mathcal{P}^\text{d}_a\bm{v}_h=\bm{\phi}+\text{d}^- p,
\end{equation}
where $\bm{\phi}\in H^1(T\mathcal{M})$, $p\in C^0(\mathcal{M}).$ It follows from  \eqref{vsplittingeqn}, \eqref{L2H1Pbound} and \eqref{Piolacommuting} that
\begin{equation}\label{phipbound}
\begin{aligned}
&\|\bm{\phi}\|_{L^2(\mathcal{M})}+\|{\rm d}^-p\|_{L^2(\mathcal{M})}\lesssim\|\mathcal{P}^{\rm d}_a\bm{v}_h\|_{L^2(\mathcal{M})}\lesssim\|\bm{v}_h\|,\\
&\|\bm{\phi}\|_{H^1(\mathcal{M})}\lesssim\|\mathcal{P}^\text{d}_a\bm{v}_h\|_{H({\rm d})}\lesssim\|\bm{v}_h\|+\|{\rm d}_h\bm{v}_h\|.
\end{aligned}
\end{equation}
Applying $\mathcal{P}^{\text{d}}_{a^{-1}}$ to \eqref{Padvh} and using the commuting property \eqref{Piolacommuting}, we obtain
\begin{equation}\label{vhdecomp1}
    \bm{v}_h=\mathcal{P}^{\rm d}_{a^{-1}}\bm{\phi}+{\rm d}_h^- \mathcal{P}^{{\rm d}^-}_{a^{-1}}p.
\end{equation}

It follows from the
property \eqref{Padinv} with $\bm{v}=\bm{\phi}$ and Lemma \ref{pihd} that $\pi^{\rm d}_h\mathcal{P}^{\rm d}_{a^{-1}}\bm{\phi}$ is well-defined. In addition,  $\mathcal{P}^{{\od}^-}_{a^{-1}}p=p\circ a^{-1}$ is continuous and the nodal interpolant $\pi^{{\od}^-}_h\mathcal{P}^{{\od}^-}_{a^{-1}}p$ exists.  
Now applying the canonical interpolation $\pi_h^{\rm{d}}$ to \eqref{vhdecomp1} and using  \eqref{Pihcommute} and ${\pi}^{\rm d}_h\mathcal{P}^{\rm d}_{a^{-1}}\bm{\phi}=\bar{\pi}^{\rm d}_h\mathcal{P}^{\rm d}_{a^{-1}}\bm{\phi}$, we have
\begin{equation}\label{vhdecomp}
\begin{aligned}
    &\bm{v}_h=\pi^{\rm d}_h\mathcal{P}^{\rm d}_{a^{-1}}\bm{\phi}+\pi_h^{{\rm d}}{\rm d}_h^-\mathcal{P}^{{\rm d}^-}_{a^{-1}}p\\
    &=\bar{\pi}^{\rm d}_h(\mathcal{P}^{\rm d}_{a^{-1}}\bm{\phi}-\mathcal{I}_h\bm{\phi}^\ell)+\bar{\pi}^\text{d}_h(\mathcal{I}_h\bm{\phi}^\ell)+{\rm d}_h^-{\pi}_h^{{\rm d}^-}\mathcal{P}^{{\rm d}^-}_{a^{-1}}p.
\end{aligned}
\end{equation}
There exists $p_h\in N({\rm d}^-_h)^\perp$ such that
\begin{equation}\label{ph}
    {\rm d}^-_hp_h={\rm d}_h^-{\pi}_h^{\rm d^-}\mathcal{P}^{{\rm d}^-}_{a^{-1}}p={\pi}_h^{\rm d}{\rm d}_h^-\mathcal{P}^{{\rm d}^-}_{a^{-1}}p.
\end{equation}
Then using the discrete Poincar\'e inequality \eqref{discretePoincare}, $\text{d}_h^- \mathcal{P}^{\text{d}^-}_{a^{-1}}p=\bm{v}_h-\mathcal{P}^\text{d}_{a^{-1}}\bm{\phi}$, $(I-\pi_h^{\text{d}})\bm{v}_h=0,$ and \eqref{Piolacommuting},   we have
\begin{equation*}
\begin{aligned}
\|p_h\|_{H(\od_h^-)}&\lesssim\|\pi_h^{\text{d}} \text{d}_h^-\mathcal{P}^{\text{d}^-}_{a^{-1}}p\|\leq\|(I-\pi_h^{\text{d}})\text{d}_h^-\mathcal{P}^{\text{d}^-}_{a^{-1}}p\|+\|\text{d}_h^-\mathcal{P}^{\text{d}^-}_{a^{-1}}p\|\\
&=\|(I-\pi_h^{\text{d}})\mathcal{P}^{\text{d}}_{a^{-1}}\bm{\phi}\|+\|\mathcal{P}^{\text{d}}_{a^{-1}}\text{d}^-p\|.
\end{aligned}
\end{equation*}
It then follows from the above estimate and \eqref{Pidiv},   \eqref{L2H1Pbound}, \eqref{phipbound} that
\begin{equation}\label{phbound}
\begin{aligned}
\|p_h\|_{H(\od_h^-)}&\lesssim\left(\sum_{\tau\in\mathcal{T}_h}h^2_\tau|{\mathcal{P}}^{\text{d}}_{a^{-1}}\bm{\phi}|_{H^1(\tau)}^2\right)^\frac{1}{2}+\|\bm{v}_h\|\\
&\lesssim h_{\max}\big(\|\bm{v}_h\|+\|\text{d}_h\bm{v}_h\|\big)+\|\bm{v}_h\|\lesssim\|\bm{v}_h\|.
\end{aligned}
\end{equation}
The other two components in the decomposition of $v_h$ are set to be  
\begin{align*}
    \tilde{\bm{v}}_h=\bar{\pi}^\text{d}_h({\mathcal{ P}}^\text{d}_{a^{-1}}\bm{\phi}-\mathcal{I}_h\bm{\phi}^\ell),\quad\bm{\phi}_h=\mathcal{I}_h\bm{\phi}^\ell.
\end{align*}
It is easy to see that for $\tau\in\mathcal{T}_h,$
\begin{equation}\label{phiellphi}
    \|\bm{\phi}^\ell\|_{\tau}\simeq\|\bm{\phi}\|_{L^2(a(\tau))},\quad\|\bm{\phi}^\ell\|_{H^1(\tau)}\simeq\|\bm{\phi}\|_{H^1(a(\tau))}.
\end{equation}
The following estimate is a consequence of \eqref{Clement}, \eqref{phipbound} and \eqref{phiellphi} 
\begin{equation*}
    \|\bm{\phi}_h\|\lesssim\|\bm{v}_h\|,\quad|\bm{\phi}_h|_{H^1(\mathcal{M}_h)}\lesssim\|\bm{v}_h\|+\|\text{d}_h\bm{v}_h\|.
\end{equation*}
As a result, we obtain
\begin{equation}\label{phihbound}
    \|\bm{\phi}_h\|_{A_h^\nabla}\lesssim\|\bm{v}_h\|_{A_h^\text{d}}.
\end{equation}
On each $\tau\in\mathcal{T}_h$, it follows from Lemma \ref{Pidinterp} that
\begin{equation}\label{vhtilde1}
\begin{aligned}
    \|\tilde{\bm{v}}_h\|_\tau&\leq\|(I-\bar{\pi}^\text{d}_h)(\mathcal{P}^\text{d}_{a^{-1}}\bm{\phi}-\mathcal{I}_h\bm{\phi}^\ell)\|_\tau+\|\mathcal{P}^\text{d}_{a^{-1}}\bm{\phi}-\mathcal{I}_h\bm{\phi}^\ell\|_\tau\\
    &\lesssim\|(\mathcal{P}^\text{d}_{a^{-1}}\bm{\phi}-\mathcal{I}_h\bm{\phi}^\ell)^\perp\|_\tau+h_\tau|\mathcal{P}^\text{d}_{a^{-1}}\bm{\phi}-\mathcal{I}_h\bm{\phi}^\ell|_{H^1(\tau)}\\
    &\qquad+\|\mathcal{P}^\text{d}_{a^{-1}}\bm{\phi}-\mathcal{I}_h\bm{\phi}^\ell\|_\tau\\
    &\lesssim\|\mathcal{P}^\text{d}_{a^{-1}}\bm{\phi}-\mathcal{I}_h\bm{\phi}^\ell\|_\tau+h_\tau|\mathcal{P}^\text{d}_{a^{-1}}\bm{\phi}-\mathcal{I}_h\bm{\phi}^\ell|_{H^1(\tau)}.
\end{aligned}
\end{equation}
Using \eqref{Ihh1} and \eqref{L2bound} and a triangle inequality, we obtain
\begin{equation}\label{term2}
    \begin{aligned}
    &\|\mathcal{P}^\text{d}_{a^{-1}}\bm{\phi}-\mathcal{I}_h\bm{\phi}^\ell\|_\tau\\
    &\leq\|\mathcal{P}^\text{d}_{a^{-1}}\bm{\phi}-\bm{\phi}^\ell\|_\tau+\|\bm{\phi}^\ell-\mathcal{I}_h\bm{\phi}^\ell\|_\tau\lesssim h_\tau\|\bm{\phi}^\ell\|_{H^1(\Omega_\tau)}.
    \end{aligned}
\end{equation}
Using \eqref{Ihh1}, \eqref{H1Pbound}, \eqref{phipbound}, we obtain
\begin{equation}\label{term3}
    |\mathcal{P}^\text{d}_{a^{-1}}\bm{\phi}-\mathcal{I}_h\bm{\phi}^\ell|_{H^1(\tau)}\lesssim\|\bm{\phi}^\ell\|_{H^1(\Omega_\tau)}.
\end{equation}
Combining \eqref{vhtilde1}--\eqref{term3} and \eqref{phiellphi}, \eqref{phipbound} yields
\begin{equation}\label{vhtildebound}
    \|h^{-1}\tilde{\bm{v}}_h\|\lesssim\left(\sum_{\tau\in\mathcal{T}_h}\|\bm{\phi}^\ell\|^2_{H^1(\Omega_\tau)}\right)^\frac{1}{2}\simeq\|\bm{\phi}\|_{H^1(\mathcal{M})}\lesssim\|\bm{v}_h\|+\|\text{d}_h\bm{v}_h\|.
\end{equation}
Finally we complete the proof with \eqref{vhdecomp}, \eqref{phbound}, \eqref{phihbound}, \eqref{vhtildebound}.
\qed\end{proof}

Let $S^\text{d}_h: H_h(\text{d}_h)^\prime\rightarrow H_h(\text{d}_h)$ be an SPD operator such that $$\|\bm{v}_h\|_{(S^\text{d}_h)^{-1}}\simeq c^\frac{1}{2}\|\bm{v}_h\|+\|h^{-1}\bm{v}_h\|,\quad\forall\bm{v}_h\in H_h(\text{d}_h).$$
In the multigrid literature, 
$S^\text{d}_h$ is known as a smoother, which could be any classical local relaxation such as the Jacobi and symmetrized Gauss--Seidel iteration.
In the following, we simply set $S^{\od}_h$ to be the operator corresponding to the inverse diagonal of the matrix for $A^{\text{d}}_h$, i.e., the Jacobi iteration. Recall the transfer operators  $\bar{\opi}_h^\text{d}: \big[H_h(\nabla_h)\big]^3\rightarrow H_h(\text{d}_h)$ and $\text{d}_h^-: H_h(\text{d}_h^-)\rightarrow H_h(\text{d}_h).$ We define the preconditioner $B_h^{\text{d}}: H_h(\text{d}_h)^\prime\rightarrow H_h(\text{d}_h)$ for $A_h^{\text{d}}$ as
\[
B_h^{\text{d}}:=S^{\text{d}}_h+\bar{\opi}_h^\text{d}\big[\big(A_h^\nabla\big)^{-1}\big]^3(\bar{\pi}_h^\text{d})^\prime+c^{-1}\text{d}_h^-\big(A_h^{\text{d}^-}\big)^{-1}(\text{d}_h^-)^\prime.
\]
Using \eqref{Pibound} and Theorem \ref{vhsplitting}, it is straightforward to verify the assumptions in Lemma \ref{FSP} with \begin{align*}
    &V=H_h(\text{d}_h),\quad A=A_h^\text{d},\\
    &\bar{V}=H_h(\text{d}_h)\times\big[H_h(\nabla_h)\big]^3\times H_h(\text{d}^-_h),\\
    &\bar{A}=(S^\text{d}_h)^{-1}\times \big[A_h^\nabla\big]^3\times c A_h^{\text{d}^-},\quad\opi=(I,\bar{\opi}_h^\text{d},\text{d}_h^-).
\end{align*} 
As a consequence of Lemma \ref{FSP}, we then obtain the following spectral equivalence and the condition number estimate 
\begin{equation}\label{kappabound}
    (A_h^{\text{d}})^{-1}\lesssim B_h^{\text{d}}\lesssim(A_h^{\text{d}})^{-1},\quad\kappa(B_h^{\text{d}}A_h^{\text{d}})\lesssim 1.
\end{equation}
In particular, when $\text{d}=\nabla\times$ or $\text{d}=\nabla\cdot$, we have
\begin{subequations}\label{Bhcurldiv}
\begin{align}
  B_h^{\nabla\times}:=&S^{\nabla\times}_h+\opi_h^{\nabla\times}\big[(A_h^\nabla)^{-1}\big]^3(\pi_h^{\nabla\times})^\prime+c^{-1}\nabla_h\big(A_h^\nabla\big)^{-1}\nabla_h^\prime,\label{Bhcurl}\\
  B_h^{\nabla\cdot}:=&S^{\nabla\cdot}_h+\bar{\pi}_h^{\nabla\cdot}\big[(A_h^\nabla)^{-1}\big]^3(\bar{\pi}_h^{\nabla\cdot})^\prime+c^{-1}\nabla_h^\perp\big(A_h^\nabla\big)^{-1}\nabla_h^{\perp\prime}.\label{Bhdiv2}
  \end{align}
\end{subequations}

Let $\{\bm{\phi}_j\}_{j=1}^J$ be a finite element basis of $H_h(\text{d}_h)$ and $\{\bm{\phi}_i^\prime\}_{i=1}^J$ the dual basis of $H_h(\text{d}_h)^\prime$ such that $\langle\bm{\phi}_i^\prime,\bm{\phi}_j\rangle=\delta_{ij}$.  Under the basis $\{\bm{\phi}_j\}_{j=1}^J$, let $\widetilde{A}^{\text{d}}_h$ (resp.~$\widetilde{B}^{\text{d}}_h$) denote the matrix representing $A^{\text{d}}_h$ (resp.~$B^{\text{d}}_h$),  $\widetilde{D}^{\text{d}}_h$ the diagonal of $\widetilde{A}^{\text{d}}_h$, $\widetilde{P}_h^{\text{d}}$ the matrix for $\bar{\opi}_h^{\text{d}}$, $\widetilde{G}_h$ the matrix for  $\nabla_h$, and $\widetilde{C}_h$ the matrix representing $\nabla_h^\perp$. Let  $\widetilde{A}_h=\widetilde{A}_h^\nabla$ be the matrix representation of $A_h^\nabla$, i.e., the surface nodal element stiffness matrix corresponding to the bilinear form $(\nabla_h\bullet,\nabla_h\bullet)_h+c(\bullet,\bullet)_h$ on $H_h(\nabla_h)$. By $\widetilde{\mathbf{A}}_h$  we denote the block diagonal matrix with $3$ copies of $\widetilde{A}_h$  as its block diagonal. In matrix notation, \eqref{kappabound}, \eqref{Bhcurldiv} translate into
\begin{equation}
\begin{aligned}
  &\widetilde{B}_h^{\nabla\times}=\big(\widetilde{D}^{\nabla\times}_h\big)^{-1}+\widetilde{P}_h^{\nabla\times}\widetilde{\mathbf{A}}_h^{-1}\big(\widetilde{P}_h^{\nabla\times}\big)^\top+c^{-1}\widetilde{G}_h\widetilde{A}_h^{-1}\widetilde{G}_h^\top,\\
&\widetilde{B}_h^{\nabla\cdot}=\big(\widetilde{D}^{\nabla\cdot}_h\big)^{-1}+\widetilde{P}_h^{\nabla\cdot}\widetilde{\mathbf{A}}_h^{-1}\big(\widetilde{P}_h^{\nabla\cdot}\big)^\top+c^{-1}\widetilde{C}_h\widetilde{A}_h^{-1}\widetilde{C}_h^\top,\\
&(\widetilde{A}_h^{\text{d}})^{-1}\lesssim\widetilde{B}_h^{\text{d}}\lesssim(\widetilde{A}_h^{\text{d}})^{-1},\quad\kappa(\widetilde{B}_h^{\text{d}}\widetilde{A}_h^{\text{d}})\lesssim1.
\end{aligned}
\end{equation}
Due to the condition number estimate given above,  PCG for \eqref{Vhd} preconditioned by $\widetilde{B}_h^\text{d}$ converges within uniformly bounded number of iterations (cf.~\cite{Xu1992}). 
In practice, the matrix inverses $\widetilde{\mathbf{A}}_h^{-1}$, $\widetilde{A}_h^{-1}$ could be approximated by any well-established fast Poisson solver on surfaces. For example, at the presence of a grid hierarchy, we are allowed to evaluate $\widetilde{\mathbf{A}}_h^{-1}$, $\widetilde{A}_h^{-1}$ using surface geometric multigrid methods in e.g., \cite{KornhuberYserentant2008,BonitoPasciak2012,Li2021SISC}. On unstructured triangulated surfaces, replacing $\widetilde{\mathbf{A}}_h^{-1}$, $\widetilde{A}_h^{-1}$ with AMG V- or W-cycle or BPX preconditioner in $\widetilde{B}_h^\text{d}$ still yields a quite efficient preconditioner.

\begin{remark}\label{remark3d}
The results in Sections \ref{secdeRham}-\ref{secHX} could be generalized to hypersurfaces without boundary. For a 3-dimensional  hypersurface $\mathcal{M}\subset\mathbb{R}^4$, we briefly explain preconditioners for the discrete problem \eqref{Vhd}. Given tangential vector fields $\bm{v}=(v_1,v_2,v_3,v_4)^\top$, $\bm{w}=(w_1,w_2,w_3,w_4)^\top$  along $\mathcal{M}$, we define the wedge product and 3-d surface curl  as
\begin{equation*}
    \bm{v}\wedge\bm{w}:=\left|
    \begin{array}{cccc}
    e_1&e_2&e_3&e_4\\
    v_1&v_2&v_3&v_4\\
    w_1&w_2&w_3&w_4\\
    \nu_1&\nu_2&\nu_3&\nu_4
    \end{array}
    \right|,\quad\nabla_\mathcal{M}\times\bm{v}:=\widetilde{\nabla}\wedge\bm{v},
\end{equation*}
where $\bm{\nu}=(\nu_1,\nu_2,\nu_3,\nu_4)^\top$ is the outward unit normal to $\mathcal{M}$, $\{e_i\}_{i=1}^4$ are the standard basis vectors in $\mathbb{R}^4,$ and $\widetilde{\nabla}$ is the gradient in $\mathbb{R}^4$. The de Rham complex on  $\mathcal{M}$ reads
\[\begin{CD}
    H(\nabla)@>\nabla_\mathcal{M}>>H(\nabla\times)@>\nabla_\mathcal{M}\times>>H(\nabla\cdot)@>\nabla_\mathcal{M}\cdot>>L^2(\mathcal{M}).
    \end{CD}
\]
The wedge product and 4-d curl of vector fields are given in \cite{GopNV2018} and are used for HX preconditioning on 4-d Euclidean regions.

We adopt the same notation used in Sections \ref{secdeRham}-\ref{secHX} with obvious generalized meanings in an ambient space $\mathbb{R}^4$ unless confusion arises. For example, $$\od^-=\nabla_\mathcal{M},~\od=\nabla_\mathcal{M}\times\quad\text{ or }\quad\od^-=\nabla_\mathcal{M}\times,~\od=\nabla_\mathcal{M}\cdot,$$ 
and $H_h(\od_h)$ in \eqref{Vhd} is the lowest-order 3-dimensional edge or face element space based on a triangulated hypersurface $\mathcal{M}_h$ with tetrahedral elements (cf.~\cite{Nedelec1980,Monk2003,ArnoldFalkWinther2009}).
In view of HX preconditioners on a 3-dimensional Euclidean region \cite{HiptmairXu2007} and the argument in Section \ref{secHX}, it is straightforward to derive preconditioners on a 3-d hypersurface $\mathcal{M}_h$
\begin{equation*}
\begin{aligned}
  &\widetilde{B}_h^{\nabla\times}=\big(\widetilde{D}^{\nabla\times}_h\big)^{-1}+\widetilde{P}_h^{\nabla\times}\widetilde{\mathbf{A}}_h^{-1}\big(\widetilde{P}_h^{\nabla\times}\big)^\top+c^{-1}\widetilde{G}_h\widetilde{A}_h^{-1}\widetilde{G}_h^\top,\\
&\widetilde{B}_h^{\nabla\cdot}=\big(\widetilde{D}^{\nabla\cdot}_h\big)^{-1}+\widetilde{P}_h^{\nabla\cdot}\widetilde{\mathbf{A}}_h^{-1}\big(\widetilde{P}_h^{\nabla\cdot}\big)^\top\\
  &\qquad+c^{-1}\widetilde{C}_h\big[\big(\widetilde{D}^{\nabla\times}_h\big)^{-1}+\widetilde{P}_h^{\nabla\times}\widetilde{\mathbf{A}}_h^{-1}\big(\widetilde{P}_h^{\nabla\times}\big)^\top\big]\widetilde{C}_h^\top,
\end{aligned}
\end{equation*}
where $\widetilde{C}_h$ is the matrix representing the 3-d discrete curl $\nabla_h\times$, and 
$\widetilde{\mathbf{A}}_h=\text{diag}(\widetilde{A}_h,\widetilde{A}_h,\widetilde{A}_h,\widetilde{A}_h)$ is a block diagonal matrix. It is possible to prove  $\widetilde{B}_h^{\nabla\times}$, $\widetilde{B}_h^{\nabla\cdot}$ are uniform preconditioners using the analysis in Sections \ref{secInterpPiola} and \ref{secHX} and tools in \cite{HiptmairXu2007,ABDG1998,Hiptmair2002}. We shall test the performance of $\widetilde{B}_h^{\nabla\times}$, $\widetilde{B}_h^{\nabla\cdot}$ in Section \ref{secNE}.
\end{remark}

\section{Computation of harmonic vector fields}\label{secharmonic}
In this section, we develop an iterative method for approximating the space of harmonic vector fields $\mathcal{H}(\od).$
For a tangential vector field $\bm{u}$ on $\mathcal{M},$ let $\sigma=-(\text{d}^-)^*\bm{u}.$ It is noted that $\bm{u}$ is harmonic if and only if $(\sigma,\bm{u})^\top$ satisfies
\begin{equation}\label{mixedHodge}
\begin{aligned}
(\sigma,\tau)_{\mathcal{M}}+(\text{d}^-\tau,\bm{u})_{\mathcal{M}}&=0,\quad\forall\tau\in H(\text{d}^-),\\
(\text{d}^-\sigma,\bm{v})_{\mathcal{M}}-(\text{d}\bm{u},\text{d}\bm{v})_{\mathcal{M}}&=0,\quad\forall \bm{v}\in H(\text{d}).
\end{aligned}
\end{equation}
In fact, $\mathcal{H}(\od)=N(\text{d}^-(\text{d}^-)^*+\text{d}^*\text{d})$ is the kernel of the Hodge Laplacian,
and \eqref{mixedHodge} is the mixed variational formulation of
$$(\text{d}^-(\text{d}^-)^*+\text{d}^*\text{d})\bm{u}=0.$$
In the discrete level, we consider the space of discrete harmonic vector fields
\[
\mathcal{H}_h(\text{d}_h):=\big\{\bm{v}_h\in H_h(\text{d}_h): \text{d}_h\bm{v}_h=0,~(\text{d}^-_h\tau_h,\bm{v}_h)_h=0~\forall\tau_h\in H_h(\text{d}_h^-)\big\}.
\]
Let $(\text{d}_h^-)^*$ be the $L^2(\mathcal{M}_h)$-adjoint of  $\text{d}_h^-: H_h(\od_h^-)\rightarrow H_h(\od_h)$ and $\sigma_h=-(\text{d}_h^-)^*\bm{u}_h$. Then $\bm{u}_h\in\mathcal{H}_h(\text{d}_h)$ if and only if $\sigma_h$ and $\bm{u}_h$ satisfy
\begin{equation}\label{discreteharmonic}
\begin{aligned}
(\sigma_h,\tau_h)_h+(\text{d}_h^-\tau_h,\bm{u}_h)_h&=0,\quad\forall\tau_h\in H_h(\text{d}_h^-),\\
(\text{d}_h^-\sigma_h,\bm{v}_h)_h-(\text{d}_h\bm{u}_h,\text{d}_h\bm{v}_h)_h&=0,\quad \forall \bm{v}_h\in H_h(\text{d}_h).
\end{aligned}
\end{equation}
Let $X_h:=H_h(\text{d}_h^-)\times H_h(\text{d}_h)$ and consider the discrete operator $\mathcal{A}_h: X_h\rightarrow X_h^\prime$
\[
\langle\mathcal{A}_h(\sigma_h,\bm{u}_h),(\tau_h,\bm{v}_h)\rangle:=(\sigma_h,\tau_h)_h+(\text{d}_h^-\tau_h,\bm{u}_h)_h+(\text{d}_h^-\sigma_h,\bm{v}_h)_h-(\text{d}_h\bm{u}_h,\text{d}_h\bm{v}_h)_h.
\]
In a compact block form, $\mathcal{A}_h$ reads
\begin{equation*}
\mathcal{A}_h=\begin{pmatrix}
I&~\od_h^-\\
\od_h^-&~-\od_h^*\od_h
\end{pmatrix}.
\end{equation*}
It is clear that 
\begin{equation}\label{HdhNAh}
  \mathcal{H}_h(\text{d}_h)=\big\{\bm{u}_h\in H_h(\text{d}_h): (-(\text{d}_h^-)^*\bm{u}_h,\bm{u}_h)^\top\in N(\mathcal{A}_h)\big\}.
\end{equation}
Therefore computing the discrete harmonic space 
is equivalent to finding a basis for the kernel of $\mathcal{A}_h.$ The dimensions of $\mathcal{H}_h(\nabla_h\times)$ and $\mathcal{H}_h(\nabla_h\cdot)$ are equal to the 1st Betti number of $\mathcal{M}_h$.  The operator $\mathcal{A}_h$ is singular when $\mathcal{M}_h$ has nontrivial cohomology groups. 

\subsection{MINRES for singular problems}
We shall construct a SPD preconditioner 
$\mathcal{B}_h: X^\prime_h\rightarrow X_h$ for $\mathcal{A}_h$ such that the condition number of $\mathcal{B}_h\mathcal{A}_h$ is uniformly bounded in certain sense even though $\mathcal{A}_h$ is singular. Let $\{\psi_i\}_{i=1}^K$ be a finite element basis of $X_h$,  $\{\psi_i^\prime\}_{i=1}^K$ the dual basis of $X_h^\prime$ such that $\langle\psi_i^\prime,\psi_j\rangle=\delta_{ij}$. Let $\widetilde{\mathcal{A}}_h$ and $\widetilde{\mathcal{B}}_h$ denote the matrix representations for $\mathcal{A}_h$ and $\mathcal{B}_h$ under these basis, respectively. We choose a random vector $b\in\mathbb{R}^K$ and the consider the algebraic system
\begin{equation}\label{minres}
    \widetilde{\mathcal{B}}_h\widetilde{\mathcal{A}}_hx=\widetilde{\mathcal{B}}_hb.
\end{equation}
In our case of interest, $\widetilde{\mathcal{A}}_h$ is rank-deficient and $b$ is almost surely not contained in the range of $\widetilde{\mathcal{A}}_h$. 
In other words, \eqref{minres} is not compatible provided $\widetilde{\mathcal{A}}_h$ has a nontrivial kernel. 
Nevertheless, the classical preconditioned MINRES method \cite{PaigeSaunders1975} minimizes the residual $\|b-\widetilde{\mathcal{A}}_hx\|_{\widetilde{\mathcal{B}}_h}$ and returns an iterative solution $x_k$ approximating the least-squares solution $x^\dagger$ for the singular problem \eqref{minres}, see \cite{ChoiPaigeSaunders2011,Choi2007}. Here $x^\dagger$ may not be the minimum length least-squares solution.

Due to the minimum residual or least-squares property 
\[
\|b-\widetilde{\mathcal{A}}_hx^\dagger\|_{\widetilde{\mathcal{B}}_h}=\min_{y\in\mathbb{R}^K}\|b-\widetilde{\mathcal{A}}_hy\|_{\widetilde{\mathcal{B}}_h},
\] 
we have
\[
(b-\widetilde{\mathcal{A}}_hx^\dagger,y)_{\widetilde{\mathcal{B}}_h}=0,\quad\forall y\in R(\widetilde{\mathcal{A}}_h),
\]
which implies that \begin{equation}\label{Brkernel}
    \widetilde{\mathcal{B}}_h(b-\widetilde{\mathcal{A}}_hx^\dagger)\in R(\widetilde{\mathcal{A}}_h)^\perp=N(\widetilde{\mathcal{A}}_h).
\end{equation}
Combining it with \eqref{HdhNAh}, we have that the $u_h$-part of the vector $\widetilde{\mathcal{B}}_h(b-\widetilde{\mathcal{A}}_hx^\dagger)$ represents a discrete tangential harmonic vector field on $\mathcal{M}_h$.

Let $\{\lambda_i\}_{i=1}^K$ be the set of eigenvalues of $\widetilde{\mathcal{B}}_h\widetilde{\mathcal{A}}_h$, arranged according to their absolute values in ascending order, that is,
\[
0=|\lambda_1|=\cdots=|\lambda_{m-1}|<|\lambda_m|\leq\cdots\leq|\lambda_K|.
\]
The next theorem shows that the convergence speed of MINRES for \eqref{minres} is determined by the effective condition number $$\hat{\kappa}(\widetilde{\mathcal{B}}_h\widetilde{\mathcal{A}}_h):=|\lambda_K/\lambda_m|.$$
\begin{theorem}\label{minresthm}
Let $b_R$ be the orthogonal projection of $b$ onto $R(\widetilde{\mathcal{A}}_h)$ with respect to the inner product $(\bullet,\bullet)_{\widetilde{\mathcal{B}}_h}$.
Let
$x_0$ be the initial guess, $x_k$ the MINRES iterative solution at the $k$-th step, and $r_k=b_R-\widetilde{\mathcal{A}}_hx_k$ for $k=0, 1, 2, \ldots$  Then we have
\[
\|r_k\|_{\widetilde{\mathcal{B}}_h}\leq2\left(\frac{\hat{\kappa}(\widetilde{\mathcal{B}}_h\widetilde{\mathcal{A}}_h)-1}{\hat{\kappa}(\widetilde{\mathcal{B}}_h\widetilde{\mathcal{A}}_h)+1}\right)^\frac{k}{2}\|r_0\|_{\widetilde{\mathcal{B}}_h}.
\]
\end{theorem}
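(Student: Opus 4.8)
The plan is to reduce the singular, indefinite, preconditioned MINRES iteration \eqref{minres} to plain symmetric MINRES for a standard (still singular) least-squares problem, and then invoke the classical Chebyshev min--max argument, paying attention both to the two-sided spectrum of $\widetilde{\mathcal{B}}_h\widetilde{\mathcal{A}}_h$ and to the rank deficiency of $\widetilde{\mathcal{A}}_h$. First I would symmetrize: since $\widetilde{\mathcal{B}}_h$ is SPD, set $\hat{A}:=\widetilde{\mathcal{B}}_h^{1/2}\widetilde{\mathcal{A}}_h\widetilde{\mathcal{B}}_h^{1/2}$, which is a symmetric matrix because $\widetilde{\mathcal{A}}_h$ is (the bilinear form defining $\mathcal{A}_h$ is symmetric). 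From $\widetilde{\mathcal{B}}_h\widetilde{\mathcal{A}}_h=\widetilde{\mathcal{B}}_h^{1/2}\hat{A}\widetilde{\mathcal{B}}_h^{-1/2}$ one sees $\hat{A}$ is similar to $\widetilde{\mathcal{B}}_h\widetilde{\mathcal{A}}_h$, hence has the same real eigenvalues $\lambda_1,\dots,\lambda_K$, with $\dim N(\hat A)=m-1$. Under the substitution $x=\widetilde{\mathcal{B}}_h^{1/2}\hat{x}$, \eqref{minres} becomes the least-squares problem for $\hat{A}\hat{x}=\hat{b}$ with $\hat{b}:=\widetilde{\mathcal{B}}_h^{1/2}b$; the $\widetilde{\mathcal{B}}_h$-norm of a residual $v$ becomes the Euclidean norm of $\widetilde{\mathcal{B}}_h^{1/2}v$; the affine Krylov search space of the preconditioned iteration is carried onto that of $\hat A$ with initial residual $\hat b-\hat A\hat x_0$; and $b_R$ is carried to $\widetilde{\mathcal{B}}_h^{1/2}b_R$, the \emph{Euclidean}-orthogonal projection of $\hat b$ onto $R(\hat A)=\widetilde{\mathcal{B}}_h^{1/2}R(\widetilde{\mathcal{A}}_h)$. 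It therefore suffices to prove $\|\hat r_k\|\le 2\bigl(\tfrac{\hat\kappa-1}{\hat\kappa+1}\bigr)^{k/2}\|\hat r_0\|$ for plain MINRES on the symmetric singular system $\hat A\hat x=\hat b$, where $\hat r_k:=\hat b_R-\hat A\hat x_k$ and $\hat\kappa=|\lambda_K|/|\lambda_m|=\hat\kappa(\widetilde{\mathcal{B}}_h\widetilde{\mathcal{A}}_h)$.

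Next I would dispose of the incompatibility. Split $\hat b=\hat b_R+\hat b_N$ with $\hat b_N\in N(\hat A)=R(\hat A)^\perp$. The $k$-th MINRES iterate satisfies $\hat x_k-\hat x_0=q(\hat A)(\hat b-\hat A\hat x_0)$ for some polynomial $q$ of degree at most $k-1$; since $\hat A\hat b_N=0$ and $\hat A q(\hat A)$ consists of terms each carrying at least one factor $\hat A$, we have $\hat A q(\hat A)\hat b_N=0$, hence $\hat r_k=\hat r_0-\hat A q(\hat A)(\hat r_0+\hat b_N)=p(\hat A)\hat r_0$ with $p(t)=1-tq(t)$, $p(0)=1$, $\deg p\le k$, while $\hat r_0=\hat b_R-\hat A\hat x_0\in R(\hat A)$. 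Because $\|\hat b-\hat A\hat x_k\|^2=\|\hat r_k\|^2+\|\hat b_N\|^2$ with $\|\hat b_N\|$ fixed, the MINRES iterate in fact minimizes $\|\hat r_k\|=\|p(\hat A)\hat r_0\|$ over all polynomials $p$ with $p(0)=1$ and $\deg p\le k$ (each such $p$ is attainable via $q(t)=(1-p(t))/t$). Expanding $\hat r_0$ in an orthonormal eigenbasis of $\hat A$ — only eigenvectors with $\lambda_i\ne 0$ occur, since $\hat r_0\in R(\hat A)$ — yields $\|\hat r_k\|\le\bigl(\min_{p(0)=1,\,\deg p\le k}\ \max_{|\lambda_m|\le |t|\le|\lambda_K|}|p(t)|\bigr)\|\hat r_0\|$.

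Finally I would estimate this min--max over the symmetric set $[-|\lambda_K|,-|\lambda_m|]\cup[|\lambda_m|,|\lambda_K|]$. Since the set is symmetric about the origin, replacing $p$ by its even part neither increases the maximum nor changes $p(0)=1$, so the minimizer may be taken of the form $p(t)=\rho(t^2)$ with $\rho(0)=1$ and $\deg\rho\le\lfloor k/2\rfloor$; the problem reduces to the one-interval Chebyshev problem $\min_{\rho(0)=1,\,\deg\rho\le\lfloor k/2\rfloor}\max_{s\in[|\lambda_m|^2,|\lambda_K|^2]}|\rho(s)|$, whose value is at most $2\bigl(\tfrac{\hat\kappa-1}{\hat\kappa+1}\bigr)^{\lfloor k/2\rfloor}$ by the standard shifted-and-scaled Chebyshev polynomial bound, the condition number $\hat\kappa^2$ of the interval $[|\lambda_m|^2,|\lambda_K|^2]$ entering through its square root $\hat\kappa$. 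Undoing the change of variables gives the assertion, the exponent $k/2$ being read as $\lfloor k/2\rfloor$, which mirrors the fact that indefinite MINRES reduces the residual in pairs of steps. I expect the true obstacle to be the incompatibility step: one must check carefully that $b\notin R(\widetilde{\mathcal{A}}_h)$ does not spoil the polynomial residual identity — precisely, that the null-space part $\hat b_N$ is annihilated by every $\hat A^j$ with $j\ge 1$ and so is invisible to $\hat r_k$ for $k\ge 1$ — and that preconditioned MINRES genuinely minimizes $\|b_R-\widetilde{\mathcal{A}}_h x_k\|_{\widetilde{\mathcal{B}}_h}$ rather than $\|b-\widetilde{\mathcal{A}}_h x_k\|_{\widetilde{\mathcal{B}}_h}$; once this reduction is secured, the symmetrization and Chebyshev arguments are entirely classical.
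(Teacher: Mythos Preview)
Your proposal is correct and follows essentially the same strategy as the paper: split off the null-space component of $b$ so that MINRES effectively solves the consistent system $\widetilde{\mathcal{A}}_h x=b_R$ on $R(\widetilde{\mathcal{A}}_h)$, then apply the standard MINRES convergence bound for symmetric indefinite operators restricted to the range. Where the paper simply cites this standard analysis, you carry out the symmetrization via $\widetilde{\mathcal{B}}_h^{1/2}$ and the Chebyshev min--max argument explicitly (including the even-polynomial reduction), making the argument more self-contained but not a genuinely different route.
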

\begin{proof}
Let $b_N$ be the orthogonal projection of $b$ onto $N(\widetilde{\mathcal{A}}_h)$ with respect to the inner product $(\bullet,\bullet)_{\widetilde{\mathcal{B}}_h}$. Without loss of generality, we assume $x_0=0.$ Let $\mathcal{K}(\widetilde{\mathcal{A}}_h,b,\ell):=\text{span}\{b,\widetilde{\mathcal{A}}_hb,\ldots,\widetilde{\mathcal{A}}^{\ell-1}_hb\}$ be the Krylov subspace. The property of MINRES  implies
\[
\|b-\widetilde{\mathcal{A}}_hx_k\|_{\widetilde{\mathcal{B}}_h}=\min_{y\in\mathcal{K}(\widetilde{\mathcal{A}}_h,b,k)}\|b-\widetilde{\mathcal{A}}_hy\|_{\widetilde{\mathcal{B}}_h}.
\] 
It then follows from 
$\|b_R-\widetilde{\mathcal{A}}_hx_k\|_{\widetilde{\mathcal{B}}_h}^2=\|b-\widetilde{\mathcal{A}}_hx_k\|^2_{\widetilde{\mathcal{B}}_h}-\|b_N\|^2_{\widetilde{\mathcal{B}}_h}$ that
\begin{align*}
    \|b_R-\widetilde{\mathcal{A}}_hx_k\|_{\widetilde{\mathcal{B}}_h}&=\min_{y\in\mathcal{K}(\widetilde{\mathcal{A}}_h,b,k)}\|b_R-\widetilde{\mathcal{A}}_hy\|_{\widetilde{\mathcal{B}}_h}\\
    &=\min_{y\in b_N+\mathcal{K}(\widetilde{\mathcal{A}}_h,b_R,k)}\|b_R-\widetilde{\mathcal{A}}_hy\|_{\widetilde{\mathcal{B}}_h},
\end{align*}
a minimum residual property of the consistent system $\widetilde{\mathcal{A}}_hx=b_R$.
Therefore applying the standard error analysis of Krylov subspace methods (cf.~\cite{Ma2016,Saad2003}) 
 to MINRES for the consistent system $\widetilde{\mathcal{A}}_hx=b_R$ yields
\begin{equation*}
\|r_k\|_{\widetilde{\mathcal{B}}_h}\leq2\left(\frac{\kappa\left(\widetilde{\mathcal{B}}_h\widetilde{\mathcal{A}}_h|_{R(\widetilde{\mathcal{A}}_h)}\right)-1}{\kappa\left(\widetilde{\mathcal{B}}_h\widetilde{\mathcal{A}}_h|_{R(\widetilde{\mathcal{A}}_h)}\right)+1}\right)^\frac{k}{2}\|r_0\|_{\widetilde{\mathcal{B}}_h}.
\end{equation*}
Due to $R(\widetilde{\mathcal{A}}_h)=N(\widetilde{\mathcal{A}}_h)^\perp,$ we have $\kappa\left(\widetilde{\mathcal{B}}_h\widetilde{\mathcal{A}}_h|_{R(\widetilde{\mathcal{A}}_h)}\right)=|\lambda_K/\lambda_m|=\hat{\kappa}(\widetilde{\mathcal{B}}_h\widetilde{\mathcal{A}}_h)$.  The proof is complete.
\qed\end{proof}
It follows from Theorem \ref{minresthm} that
$\|\widetilde{\mathcal{B}}_hr_k\|_{\widetilde{\mathcal{B}}_h^{-1}}=\|r_k\|_{\widetilde{\mathcal{B}}_h}\rightarrow0$ and \begin{equation*}
   \widetilde{\mathcal{B}}_h(b-\widetilde{\mathcal{A}}_hx_k)=\widetilde{\mathcal{B}}_h(b-b_R)+\widetilde{\mathcal{B}}_hr_k\xrightarrow{k\rightarrow\infty}\widetilde{\mathcal{B}}_h(b-b_R)\in N(\widetilde{\mathcal{A}}_h). 
\end{equation*}
When applying MINRES to \eqref{minres}, the stopping criterion could no longer be the norm of $b-\widetilde{\mathcal{A}}_hx_k$ because \eqref{minres} has no solution and $b-\widetilde{\mathcal{A}}_hx^\dagger\neq0$. In view of \eqref{Brkernel} and $\widetilde{\mathcal{A}}_h\widetilde{\mathcal{B}}_h(b-\widetilde{\mathcal{A}}_hx^\dagger)=0$, the norm of $\widetilde{\mathcal{A}}_h\widetilde{\mathcal{B}}_h(b-\widetilde{\mathcal{A}}_hx_k)$ is a viable stopping criterion, see \cite{ChoiPaigeSaunders2011}. When that quantity is reduced below the given error tolerance at step $k$, we accept $\widetilde{\mathcal{B}}_h(b-\widetilde{\mathcal{A}}_hx_k)$ as a null vector of $\widetilde{\mathcal{A}}_h$ and the $u_h$-part of $\widetilde{\mathcal{B}}_h(b-\widetilde{\mathcal{A}}_hx_k)$ as a representation of a discrete tangential harmonic field.
Besides the classical MINRES, other Krylov subspace methods for singular least-squares problems could be found in e.g., \cite{ChoiPaigeSaunders2011}.

\subsection{Block diagonal HX preconditioning}
Natural bounds of the extreme eigenvalues $\lambda_m$, $\lambda_K$ of  $\widetilde{\mathcal{B}}_h\widetilde{\mathcal{A}}_h$ are hidden in the analytical property of $\mathcal{B}_h\mathcal{A}_h$ with a carefully chosen preconditioner $\mathcal{B}_h$.
Following the preconditioning framework for saddle-point systems in \cite{LoghinWathen2004,MardalWinther2011}, we let $\mathcal{B}^{\text{ex}}_h: X^\prime_h\rightarrow X_h$ be the Riesz representation of $X^\prime_h$. In matrix notation, $\mathcal{B}^{\text{ex}}_h$ is a block operator \begin{equation*}
\mathcal{B}^{\text{ex}}_h:=\begin{pmatrix}
\big(A_h^{\text{d}^-}\big)^{-1}&O\\
O&\big(A_h^{\text{d}}\big)^{-1}
\end{pmatrix}.
\end{equation*}
First we note that $\mathcal{B}^{\text{ex}}_h\mathcal{A}_h: X_h\rightarrow X_h$ is bounded, i.e.,
\begin{align*}
    &\big(\mathcal{B}^{\text{ex}}_h\mathcal{A}_h(\sigma_h,\bm{u}_h),(\tau_h,\bm{v}_h)\big)_{X_h}=\langle\mathcal{A}_h(\sigma_h,\bm{u}_h),(\tau_h,\bm{v}_h)\rangle\\
&\leq\big(\|\sigma_h\|^2_{H(\text{d}^-_h)}+\|\bm{u}_h\|^2_{H(\text{d}_h)}\big)^\frac{1}{2}\big(\|\tau_h\|^2_{H(\text{d}^-_h)}+\|\bm{v}_h\|^2_{H(\text{d}_h)}\big)^\frac{1}{2}.
\end{align*}
When $\mathcal{B}_h=\mathcal{B}_h^{\rm ex}$, the maximum absolute eigenvalue is bounded by
\begin{equation}\label{lambdaJ}
   |\lambda_K|\leq1.
\end{equation}
To estimate the condition number $\hat{\kappa}(\widetilde{\mathcal{B}}^{\text{ex}}_h\widetilde{\mathcal{A}}_h)$, we need the next lemma. 
\begin{lemma}\label{infsup}
Let $V_h=N(\od_h)^\perp\oplus R(\od_h^-).$ For any $\hat{\bm{v}}_h\in V_h$ and $\hat{\tau}_h\in H_h(\od^-_h)$, there exist $\hat{\bm{u}}_h\in V_h$ and $\hat{\sigma}_h\in H_h(\od^-_h)$ such that
\begin{equation*}
\langle\mathcal{A}_h(\hat{\sigma}_h,\hat{\bm{u}}_h),(\hat{\tau}_h,\hat{\bm{v}}_h)\rangle\geq\beta\|(\hat{\sigma}_h,\hat{\bm{u}}_h)\|_{X_h}\|(\hat{\tau}_h,\hat{\bm{v}}_h)\|_{X_h},
\end{equation*}
where $\beta>0$ depends only on the discrete Poincar\'e constant $c_{h,P}$.
\end{lemma}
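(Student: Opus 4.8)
The plan is to prove this discrete inf-sup condition by an explicit construction of the trial pair, in the spirit of the stability proof for the mixed Hodge Laplacian. Recall that $\langle\mathcal{A}_h(\sigma_h,\bm{u}_h),(\tau_h,\bm{v}_h)\rangle$ is by definition the bilinear form $B\big((\sigma_h,\bm{u}_h),(\tau_h,\bm{v}_h)\big):=(\sigma_h,\tau_h)_h+(\od_h^-\tau_h,\bm{u}_h)_h+(\od_h^-\sigma_h,\bm{v}_h)_h-(\od_h\bm{u}_h,\od_h\bm{v}_h)_h$, which is linear in its first argument. Fix $\hat{\tau}_h\in H_h(\od_h^-)$ and $\hat{\bm{v}}_h\in V_h$. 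Since $V_h=N(\od_h)^\perp\oplus R(\od_h^-)$, I would first split $\hat{\bm{v}}_h=\hat{\bm{v}}_h^\perp+\od_h^-\hat{q}_h$ with $\hat{\bm{v}}_h^\perp\in N(\od_h)^\perp$ and $\hat{q}_h\in N(\od_h^-)^\perp\subset H_h(\od_h^-)$ uniquely determined, and then seek $(\hat{\sigma}_h,\hat{\bm{u}}_h)$ as a linear combination $\gamma_1(\hat{\tau}_h,-\hat{\bm{v}}_h)+\gamma_2(0,\od_h^-\hat{\tau}_h)+\gamma_3(\hat{q}_h,0)$ with positive weights $\gamma_1,\gamma_2,\gamma_3$ to be fixed.

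Next I would test $B$ against $(\hat{\tau}_h,\hat{\bm{v}}_h)$ on each of the three summands. Using only $\od_h\circ\od_h^-=0$ and the $L^2$-orthogonality $R(\od_h^-)\perp N(\od_h)^\perp$, a direct computation gives $B\big((\hat{\tau}_h,-\hat{\bm{v}}_h),(\hat{\tau}_h,\hat{\bm{v}}_h)\big)=\|\hat{\tau}_h\|^2+\|\od_h\hat{\bm{v}}_h\|^2$, $B\big((0,\od_h^-\hat{\tau}_h),(\hat{\tau}_h,\hat{\bm{v}}_h)\big)=\|\od_h^-\hat{\tau}_h\|^2$, and $B\big((\hat{q}_h,0),(\hat{\tau}_h,\hat{\bm{v}}_h)\big)=(\hat{q}_h,\hat{\tau}_h)_h+\|\od_h^-\hat{q}_h\|^2$, where $\|\cdot\|=\|\cdot\|_{L^2(\mathcal{M}_h)}$. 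Adding these with the weights and bounding the single indefinite term $(\hat{q}_h,\hat{\tau}_h)_h$ by Young's inequality, I would then invoke two Poincar\'e-type estimates: the discrete Poincar\'e inequality \eqref{discretePoincare} for $\hat{\bm{v}}_h^\perp\in N(\od_h)^\perp$, giving $\|\hat{\bm{v}}_h^\perp\|\lesssim\|\od_h\hat{\bm{v}}_h^\perp\|=\|\od_h\hat{\bm{v}}_h\|$, and the elementary scalar Poincar\'e inequality on $\mathcal{M}_h$ for $\hat{q}_h\in N(\od_h^-)^\perp$, giving $\|\hat{q}_h\|\lesssim\|\od_h^-\hat{q}_h\|$. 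Taking $\gamma_1$ large enough relative to $\gamma_2,\gamma_3$ and the Poincar\'e constants (and the Young parameter small), together with $\|\hat{\bm{v}}_h\|^2=\|\hat{\bm{v}}_h^\perp\|^2+\|\od_h^-\hat{q}_h\|^2$, yields the coercivity bound $\|(\hat{\tau}_h,\hat{\bm{v}}_h)\|_{X_h}^2=\|\hat{\tau}_h\|_{H(\od_h^-)}^2+\|\hat{\bm{v}}_h\|_{H(\od_h)}^2\lesssim B\big((\hat{\sigma}_h,\hat{\bm{u}}_h),(\hat{\tau}_h,\hat{\bm{v}}_h)\big)$, with constants depending only on $c_{h,P}$ and the mesh.

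To finish I would check the two remaining points. First, $\hat{\bm{u}}_h=-\gamma_1\hat{\bm{v}}_h+\gamma_2\od_h^-\hat{\tau}_h$ lies in $V_h$ because $\hat{\bm{v}}_h\in V_h$, $\od_h^-\hat{\tau}_h\in R(\od_h^-)\subset V_h$, and $V_h$ is a subspace, while $\hat{\sigma}_h=\gamma_1\hat{\tau}_h+\gamma_3\hat{q}_h\in H_h(\od_h^-)$. Second, the stability bound $\|(\hat{\sigma}_h,\hat{\bm{u}}_h)\|_{X_h}\lesssim\|(\hat{\tau}_h,\hat{\bm{v}}_h)\|_{X_h}$ follows from $\|\od_h(\od_h^-\hat{\tau}_h)\|=0$, $\|\od_h^-\hat{\tau}_h\|\le\|\hat{\tau}_h\|_{H(\od_h^-)}$, and once more $\|\hat{q}_h\|_{H(\od_h^-)}\lesssim\|\od_h^-\hat{q}_h\|\le\|\hat{\bm{v}}_h\|$. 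Combining the coercivity and stability estimates gives $\langle\mathcal{A}_h(\hat{\sigma}_h,\hat{\bm{u}}_h),(\hat{\tau}_h,\hat{\bm{v}}_h)\rangle\ge\beta\|(\hat{\sigma}_h,\hat{\bm{u}}_h)\|_{X_h}\|(\hat{\tau}_h,\hat{\bm{v}}_h)\|_{X_h}$ with $\beta$ depending only on $c_{h,P}$, the scalar Poincar\'e constant being uniformly bounded over the mesh family and absorbed into the generic constant.

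The hard part is not any single estimate but the bookkeeping: arranging the discrete Hodge splitting of $\hat{\bm{v}}_h$ so that $\hat{q}_h\in N(\od_h^-)^\perp$, which is precisely what makes the $\od_h^-$-Poincar\'e inequality applicable, and then pinning down $\gamma_1,\gamma_2,\gamma_3$ and the Young parameter once and for all so that the lone cross term $(\hat{q}_h,\hat{\tau}_h)_h$ is absorbed into $\|\od_h^-\hat{q}_h\|^2$ without spoiling the control of $\|\hat{\tau}_h\|^2$, all with constants that do not degenerate as $h\to0$.
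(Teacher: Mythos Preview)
Your proposal is correct and follows essentially the same route as the paper: you perform the discrete Hodge splitting $\hat{\bm{v}}_h=\hat{\bm{v}}_h^\perp+\od_h^-\hat{q}_h$ and construct $(\hat{\sigma}_h,\hat{\bm{u}}_h)$ as a linear combination of $(\hat{\tau}_h,-\hat{\bm{v}}_h)$, $(0,\od_h^-\hat{\tau}_h)$, and $(\hat{q}_h,0)$, then absorb the single cross term $(\hat{q}_h,\hat{\tau}_h)_h$ via Young and the Poincar\'e inequality for $\hat{q}_h\in N(\od_h^-)^\perp$. The paper simply fixes the weights as $\gamma_1=\gamma_2=1$ and $\gamma_3=1/c_{h,P}^2$ from the outset, but otherwise the argument is the same.
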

\begin{proof}
Consider the decomposition
\begin{equation*}
    \hat{\bm{v}}_h=\bm{w}_h+\text{d}^-_h\rho_h
\end{equation*}
with $\bm{w}_h\in N(\od_h)^\perp$ and $\rho_h\in N(\od_h^-)^\perp$. The inequality \eqref{discretePoincare} implies
\begin{subequations}
\begin{align}
    \|\bm{w}_h\|&\leq c_{h,P}\|\text{d}_h\bm{w}_h\|=c_{h,P}\|\text{d}_h\hat{\bm{v}}_h\|,\label{wh}\\
    \|\rho_h\|&\leq c_{h,P}\|\text{d}_h^-\rho_h\|.\label{rhoh}
\end{align}
\end{subequations}
Let $s=1/c_{h,P}^2$ and define
\begin{align*}
    \hat{\sigma}_h&:=\hat{\tau}_h+s\rho_h\in H_h(\od_h^-),\\ \hat{\bm{u}}_h&:=-\hat{\bm{v}}_h+\text{d}_h^-\hat{\tau}_h\in V_h.
\end{align*}
Then using \eqref{rhoh} and a mean value inequality we have
\begin{equation*}
    \begin{aligned}
    &\langle\mathcal{A}_h(\hat{\sigma_h},\hat{\bm{u}}_h),(\hat{\tau}_h,\hat{\bm{v}}_h)\rangle=\|\hat{\tau}_h\|^2+s(\rho_h,\hat{\tau}_h)_h-(\text{d}_h^-\hat{\tau}_h,\hat{\bm{v}}_h)_h+\|\text{d}_h^-\hat{\tau}_h\|^2\\
    &\quad+(\text{d}_h^-\hat{\tau}_h,\hat{\bm{v}}_h)_h+s(\text{d}_h^-\rho_h,\hat{\bm{v}}_h)_h+\|\text{d}_h\hat{\bm{v}}_h\|^2\\
    &\quad\geq\frac{1}{2}\|\hat{\tau}_h\|^2+\|\text{d}_h^-\hat{\tau}_h\|^2+s\|\text{d}_h^-\rho_h\|^2-\frac{s^2}{2}\|\rho_h\|^2+\|\text{d}_h\hat{\bm{v}}_h\|^2.
    \end{aligned}
\end{equation*}
Using the above estimate and  \eqref{wh}, \eqref{rhoh}
leads to 
\begin{equation}\label{Ahbound}
    \begin{aligned}
    &\langle\mathcal{A}_h(\hat{\sigma}_h,\hat{\bm{u}}_h),(\hat{\tau}_h,\hat{\bm{v}}_h)\rangle\\
    &\geq\frac{1}{2}\|\hat{\tau}_h\|^2+\|\text{d}_h^-\hat{\tau}_h\|^2+\frac{1}{2c_{h,P}^2}\|\text{d}_h^-\rho_h\|^2+\|\text{d}_h\hat{\bm{v}}_h\|^2\\
    &\gtrsim\|\hat{\tau}_h\|^2_{H(\text{d}_h^-)}+\|\hat{\bm{v}}_h\|^2_{H(\text{d}_h)}.
    \end{aligned}
\end{equation}
On the other hand, it follows from the definitions of $\hat{\sigma}_h, \hat{\bm{u}}_h$ that 
\begin{equation}\label{sigmahuhbound}
    \|\hat{\sigma}_h\|_{H(\text{d}_h^-)}+\|\hat{\bm{u}}_h\|_{H(\text{d}_h)}\lesssim\|\hat{\tau}_h\|_{H(\text{d}_h^-)}+\|\hat{\bm{v}}_h\|_{H(\text{d}_h)}.
\end{equation}
Combining \eqref{Ahbound} and \eqref{sigmahuhbound} completes the proof.
\qed\end{proof}

In finite element exterior calculus, the discrete Hodge decomposition reads
$$H_h(\od_h)=N(\od_h)^\perp\oplus R(\od_h^-)\oplus\mathcal{H}_h(\od_h).$$
Hence $V_h=\mathcal{H}_h(\od_h)^\perp$ is the orthogonal complement of $\mathcal{H}_h(\od_h)$ in $H_h(\od_h)$. In the case $\mathcal{H}_h(\od_h)=\{0\},$ Lemma \ref{infsup} is the inf-sup condition of the mixed method for the Hodge Laplacian proved in \cite{ArnoldFalkWinther2006,ArnoldFalkWinther2010}. In general, a direct consequence of Lemma \ref{infsup} is the following partial inf-sup condition modulo $\mathcal{H}_h(\od_h)$
\begin{equation}\label{infsup2}
    \inf_{0\neq(\hat{\sigma}_h,\hat{\bm{u}}_h)\in H_h(\od^-_h)\times V_h}\sup_{0\neq(\hat{\tau}_h,\hat{\bm{v}}_h)\in H_h(\od^-_h)\times V_h}\frac{\big(\mathcal{B}^{\text{ex}}_h\mathcal{A}_h(\hat{\sigma}_h,\hat{\bm{u}}_h),(\hat{\tau}_h,\hat{\bm{v}}_h)\big)_{X_h}}{\|(\hat{\sigma}_h,\hat{\bm{u}}_h)\|_{X_h}\|(\hat{\tau}_h,\hat{\bm{v}}_h)\|_{X_h}}\geq\beta.
\end{equation}
We next show that $\mathcal{B}^{\rm ex}_h$ is a uniform preconditioner for computing $N(\mathcal{A}_h)$.
\begin{theorem}\label{condex}
When $\mathcal{B}_h=\mathcal{B}^{\rm ex}_h$, we have
\begin{equation*}
    \hat{\kappa}(\widetilde{\mathcal{B}}^{\rm ex}_h\widetilde{\mathcal{A}}_h)=\hat{\kappa}(\mathcal{B}^{\rm ex}_h\mathcal{A}_h)\leq\beta^{-1}.
\end{equation*}
\end{theorem}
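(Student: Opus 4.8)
The plan is to show that $\hat\kappa(\mathcal{B}^{\mathrm{ex}}_h\mathcal{A}_h)=\hat\kappa(\widetilde{\mathcal B}^{\mathrm{ex}}_h\widetilde{\mathcal A}_h)\le\beta^{-1}$ by combining three ingredients already assembled in the excerpt: the upper bound $|\lambda_K|\le 1$ from \eqref{lambdaJ}, the partial inf--sup bound \eqref{infsup2}, and the observation that $\hat\kappa$ only sees the action of $\mathcal{B}^{\mathrm{ex}}_h\mathcal{A}_h$ on $N(\mathcal{A}_h)^\perp$. First I would identify $N(\mathcal{A}_h)$ precisely. From \eqref{HdhNAh} and the discrete Hodge decomposition $H_h(\od_h)=N(\od_h)^\perp\oplus R(\od_h^-)\oplus\mathcal H_h(\od_h)$, one checks that $(\sigma_h,\bm u_h)\in N(\mathcal{A}_h)$ forces $\sigma_h=0$, $\od_h\bm u_h=0$, and $(\od_h^-\tau_h,\bm u_h)_h=0$ for all $\tau_h$, i.e.\ $\bm u_h\in\mathcal H_h(\od_h)$. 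Hence $N(\mathcal{A}_h)=\{0\}\times\mathcal H_h(\od_h)$, and since $\mathcal{A}_h$ is $L^2$-symmetric and $\mathcal{B}^{\mathrm{ex}}_h$ is the Riesz map, $R(\widetilde{\mathcal A}_h)=N(\widetilde{\mathcal A}_h)^\perp$ in the $(\cdot,\cdot)_{\widetilde{\mathcal B}^{\mathrm{ex}}_h}$ inner product, which corresponds to the $X_h$-inner product under the chosen bases. Therefore the $X_h$-orthogonal complement of $N(\mathcal A_h)$ is exactly $H_h(\od_h^-)\times V_h$ with $V_h=N(\od_h)^\perp\oplus R(\od_h^-)=\mathcal H_h(\od_h)^\perp$, which is the space appearing in Lemma \ref{infsup} and in \eqref{infsup2}.

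Next I would translate the spectral quantities into operator language. Since $\widetilde{\mathcal B}^{\mathrm{ex}}_h$ is the Gram matrix of the $X_h$-inner product in the basis $\{\psi_i\}$, the generalized eigenvalues of $\widetilde{\mathcal B}^{\mathrm{ex}}_h\widetilde{\mathcal A}_h$ coincide with the eigenvalues of the operator $\mathcal{B}^{\mathrm{ex}}_h\mathcal A_h:X_h\to X_h$, which is $X_h$-self-adjoint because $\mathcal A_h$ is symmetric; this justifies $\hat\kappa(\widetilde{\mathcal B}^{\mathrm{ex}}_h\widetilde{\mathcal A}_h)=\hat\kappa(\mathcal B^{\mathrm{ex}}_h\mathcal A_h)$. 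Restricted to the invariant subspace $W_h:=N(\mathcal A_h)^\perp=H_h(\od_h^-)\times V_h$, the operator $\mathcal B^{\mathrm{ex}}_h\mathcal A_h|_{W_h}$ is invertible and self-adjoint, so by standard spectral theory
\[
\hat\kappa(\mathcal B^{\mathrm{ex}}_h\mathcal A_h)=\kappa\bigl(\mathcal B^{\mathrm{ex}}_h\mathcal A_h|_{W_h}\bigr)=\frac{\sup_{0\neq w\in W_h}\dfrac{|(\mathcal B^{\mathrm{ex}}_h\mathcal A_h w,w)_{X_h}|}{\|w\|_{X_h}^2}}{\inf_{0\neq w\in W_h}\dfrac{\|\mathcal B^{\mathrm{ex}}_h\mathcal A_h w\|_{X_h}}{\|w\|_{X_h}}}.
\]
The numerator is bounded by $|\lambda_K|\le 1$ from \eqref{lambdaJ}. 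The denominator is bounded below by $\beta$: indeed, for $0\neq w=(\hat\sigma_h,\hat{\bm u}_h)\in W_h$, \eqref{infsup2} produces a test pair $(\hat\tau_h,\hat{\bm v}_h)\in W_h$ with
\[
\|\mathcal B^{\mathrm{ex}}_h\mathcal A_h w\|_{X_h}\,\|(\hat\tau_h,\hat{\bm v}_h)\|_{X_h}\ge\bigl(\mathcal B^{\mathrm{ex}}_h\mathcal A_h w,(\hat\tau_h,\hat{\bm v}_h)\bigr)_{X_h}\ge\beta\|w\|_{X_h}\|(\hat\tau_h,\hat{\bm v}_h)\|_{X_h},
\]
so $\|\mathcal B^{\mathrm{ex}}_h\mathcal A_h w\|_{X_h}\ge\beta\|w\|_{X_h}$. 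Dividing the two bounds gives $\hat\kappa(\mathcal B^{\mathrm{ex}}_h\mathcal A_h)\le 1/\beta=\beta^{-1}$.

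The main obstacle I anticipate is the careful bookkeeping that $W_h=N(\mathcal A_h)^\perp$ really equals $H_h(\od_h^-)\times V_h$ in the $X_h=H_h(\od_h^-)\times H_h(\od_h)$ inner product: this rests on $N(\mathcal A_h)=\{0\}\times\mathcal H_h(\od_h)$, on $\mathcal H_h(\od_h)^\perp=V_h$ from the discrete Hodge decomposition, and on the self-adjointness of $\mathcal A_h$ giving $R(\mathcal A_h)=N(\mathcal A_h)^\perp$ (so MINRES's effective condition number genuinely ranges over $W_h$ and not some oblique complement). Once that subspace identification is in place, everything else is the standard Babu\v{s}ka--Brezzi-style argument that a bounded, self-adjoint, inf--sup-stable operator has condition number at most (upper bound)/(inf--sup constant), and the coincidence of matrix and operator spectra via the Riesz/Gram matrix is routine. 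One should also note the trivial case $\mathcal H_h(\od_h)=\{0\}$, where $\mathcal A_h$ is nonsingular, $\hat\kappa=\kappa$, and the statement reduces to the classical well-posedness estimate for the mixed Hodge Laplacian.
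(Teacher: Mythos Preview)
Your proposal is correct and follows essentially the same route as the paper: both arguments combine the upper bound \eqref{lambdaJ} with the partial inf--sup condition \eqref{infsup2} on $H_h(\od_h^-)\times V_h$ to bound $|\lambda_m|\ge\beta$. The paper argues slightly more directly---it takes an eigenfunction for $\lambda_m$, projects its second component onto $V_h$, and applies \eqref{infsup2}---whereas you first identify $N(\mathcal{A}_h)=\{0\}\times\mathcal{H}_h(\od_h)$ explicitly and work with the invariant subspace $W_h=N(\mathcal{A}_h)^\perp$; these are two presentations of the same idea. One small slip: $\widetilde{\mathcal B}^{\mathrm{ex}}_h$ is the \emph{inverse} of the Gram matrix of the $X_h$-inner product, not the Gram matrix itself, but this does not affect your conclusion that the matrix and operator spectra coincide.
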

\begin{proof}
Let $0\neq(\sigma_h,\bm{u}_h)\in H_h(\od^-_h)\times H_h(\od_h)$ be an eigenfunction associated with the extreme eigenvalue $\lambda_m$, i.e.,  $\mathcal{B}^{\rm ex}_h\mathcal{A}_h(\sigma_h,\bm{u}_h)=\lambda_m(\sigma_h,\bm{u}_h)$. Let $\hat{\bm{u}}_h\in V_h$ be the orthogonal projection of $\bm{u}_h$ onto $V_h$ with respect to $(\bullet,\bullet)_{H(\od_h)}.$ Due to $(0,\bm{u}_h-\hat{\bm{u}}_h)\in N(\mathcal{A}_h)$, we obtain that
\begin{equation}\label{eigen}
    \big(\mathcal{B}^{\rm ex}_h\mathcal{A}_h(\sigma_h,\hat{\bm{u}}_h),(\hat{\tau}_h,\hat{\bm{v}}_h)\big)_{X_h}=\lambda_m\big((\sigma_h,\hat{\bm{u}}_h),(\hat{\tau}_h,\hat{\bm{v}}_h)\big)_{X_h}
\end{equation}
for all $\hat{\tau}_h\in H_h(\od_h^-)$ and $\hat{\bm{v}}_h\in V_h.$
It then follows from \eqref{eigen} and \eqref{infsup2} with $\hat{\sigma}_h=\sigma_h$ that
\begin{equation}\label{lambdam}
    |\lambda_m|\geq\beta.
\end{equation}
Combining \eqref{lambdam}  and \eqref{lambdaJ} finishes the proof.
\qed\end{proof}

In practice, we replace the diagonal block $\big(A_h^{\text{d}}\big)^{-1}$ in $\mathcal{B}^{\text{ex}}_h$ with the surface HX preconditioners proposed in Section \ref{secHX} and obtain 
\begin{equation}\label{blockHX}
\mathcal{B}^{\text{HX}}_h:=\begin{pmatrix}
(A_h^{\nabla})^{-1}&O\\
O&B_h^{\text{d}}\end{pmatrix}.
\end{equation}
Using Theorem \ref{condex} and \eqref{kappabound}, we obtain the condition number estimate
\[
\hat{\kappa}(\widetilde{\mathcal{B}}^{\text{HX}}_h\widetilde{\mathcal{A}}_h)=\hat{\kappa}(\mathcal{B}^{\text{HX}}_h\mathcal{A}_h)\leq\beta^{-1}\kappa(B^\text{d}_hA^\text{d}_h)\lesssim1.
\]
It then follows from the above estimate and Theorem \ref{minresthm} that MINRES for \eqref{minres} with $\widetilde{\mathcal{B}}_h=\widetilde{\mathcal{B}}^{\text{HX}}_h$ uniformly converges with respect to the mesh size $h$.

\begin{figure}[tbhp]
\centering
\subfloat[]{\label{torus1}\includegraphics[width=5.5cm,height=5cm]{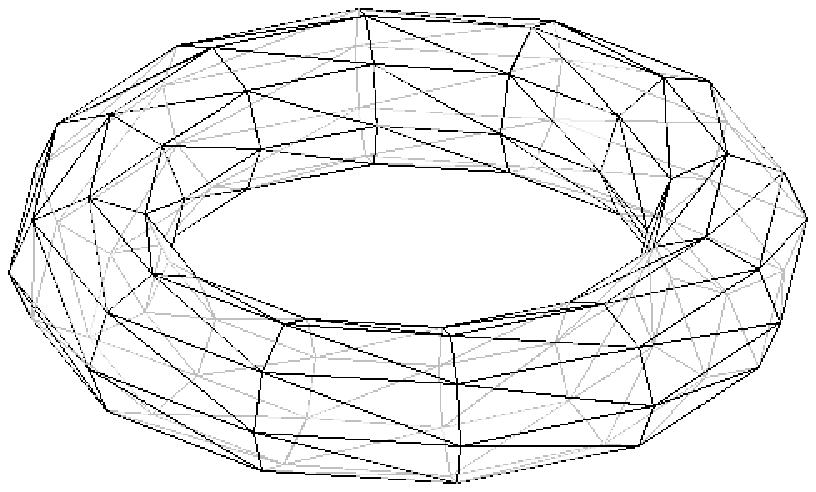}}
\subfloat[]{\label{torus2}\includegraphics[width=5.5cm,height=5cm]{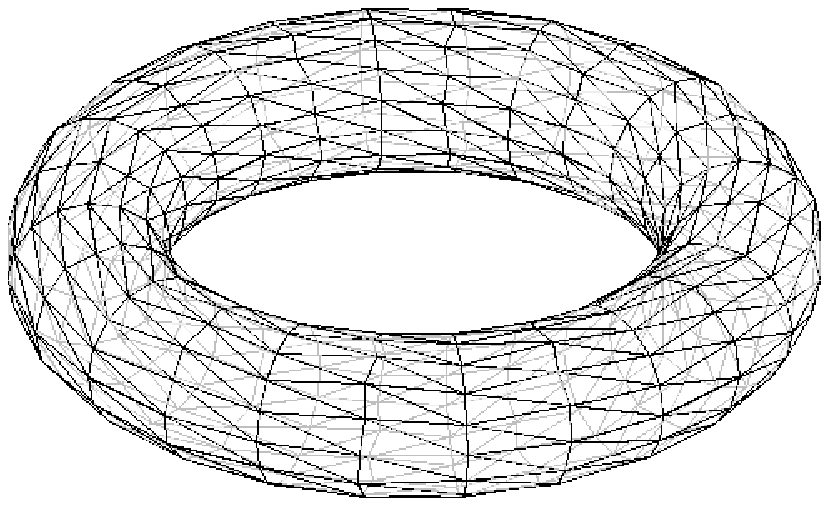}}
\caption{(a) An initial mesh $\mathbb{T}_0^2$, 192 elements; (b) A mesh with vertices on $\mathbb{T}^2$, 768 elements.}
\label{2tori}
\end{figure}

\begin{table}[ht]
\caption{PCG iterations for the $N_0$ and $RT_0$ element on $\mathbb{T}^2$ with $c=1$.}
\centering
\begin{tabular}{|c|c|c|c|c|c|}
\hline
$N$ & $B_{N_0}$&$E_{N_0}$ &$B_{RT_0}$ & $E_{RT_0}$ \\
\hline
192&15&3.090e-7&24&9.164e-7\\
768&16&8.952e-7&27&5.092e-7\\
3072&17&3.211e-7&28&3.176e-7\\
12288&17&3.684e-7&28&5.586e-7\\
49152&16&9.776e-7&28&8.382e-7\\
196608&16&5.936e-7&29&4.350e-7\\
786432&16&4.447e-7&29&5.264e-7\\
\hline
\end{tabular}
\label{T2N0RT0tau1}
\end{table}

\begin{table}[ht]
\caption{PCG iterations for the $N_0$ and $RT_0$ element on $\mathbb{T}^2$ with $c=10000$.}
\centering
\begin{tabular}{|c|c|c|c|c|c|}
\hline
$N$ & $B_{N_0}$&$E_{N_0}$ &$B_{RT_0}$ & $E_{RT_0}$ \\
\hline
192&15&6.216e-7&21&6.298e-7\\
768&21&4.509e-7&22&7.026e-7\\
3072&22&6.942e-7&23&6.672e-7\\
12288&21&6.597e-7&21&7.212e-7\\
49152&18&8.559e-7&18&8.445e-7\\
196608&14&7.024e-7&14&7.029e-7\\
786432&10&8.348e-7&10&8.304e-7\\
\hline
\end{tabular}
\label{T2N0RT0tau10000}
\end{table}

\begin{table}[ht]
\caption{Convergence history of discretization errors with $c=1$ on $\mathbb{S}^3$.}
\centering
\begin{tabular}{|c|c|c|c|c|c|c|}
\hline
$N$ & $\|\bm{u}-\bm{u}_h({N_0})\|$& order &$\|\bm{u}-\bm{u}_h({RT_0})\|$& order \\
\hline
128&2.056& &2.575&\\
1024&1.229&0.742 &1.176 &1.131\\
8192&6.460e-1&0.928 &5.097e-1&1.206\\
65536&3.280e-1& 0.978 &2.393e-1&1.091\\
524288&1.648e-1& 0.993 &1.175e-1&1.026\\
4194304&8.248e-2&0.996 &5.845e-2&1.007\\
\hline
\end{tabular}
\label{S3convergence}
\end{table}

\begin{table}[ht]
\caption{PCG iterations for the $N_0$ and $RT_0$ element on $\mathbb{S}^3$ with $c=1$.}
\centering
\begin{tabular}{|c|c|c|c|c|c|c|}
\hline
$N$ & $B_{N_0}$& $E_{N_0}$ & $B_{RT_0}$ & $E_{RT_0}$ \\
\hline
128&10&3.227e-7&15&5.754e-7\\
1024&11&6.385e-7&18 &3.382e-7\\
8192&12&4.564e-7 &18&8.321e-7\\
65536&15&8.714e-7&21&9.625e-7\\
524288&20&8.735e-7 &28&8.996e-7\\
4194304&27&7.356e-7 &36&8.731e-7\\
\hline
\end{tabular}
\label{S3N0RT0tau1}
\end{table}

\begin{table}[ht]
\caption{PCG iterations for the $N_0$ and $RT_0$ element on $\mathbb{S}^3$ with $c=10000$.}
\centering
\begin{tabular}{|c|c|c|c|c|c|c|}
\hline
$N$ & $B_{N_0}$& $E_{N_0}$ & $B_{RT_0}$ & $E_{RT_0}$ \\
\hline
128&14&9.790e-7&15&6.657e-7\\
1024&16&8.223e-7&16 &7.861e-7\\
8192&17&6.976e-7&16&7.470e-7\\
65536&16&9.711e-7&15&8.524e-7\\
524288&14&8.470e-7&14&5.353e-7\\
4194304&12&6.780e-7&12&3.922e-7\\
\hline
\end{tabular}
\label{S3N0RT0tau10000}
\end{table}

\begin{figure}[tbhp]
\centering
\subfloat[]{\label{harmonic1}\includegraphics[width=5.5cm,height=5cm]{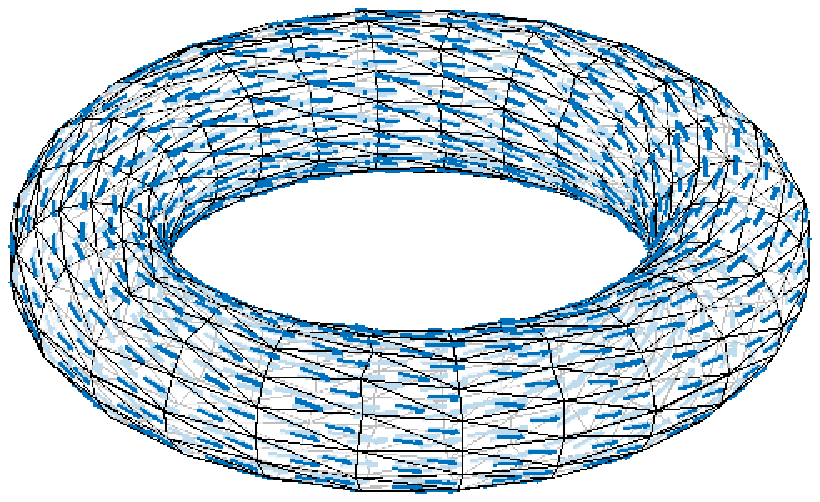}}
\subfloat[]{\label{harmonic2}\includegraphics[width=5.5cm,height=5cm]{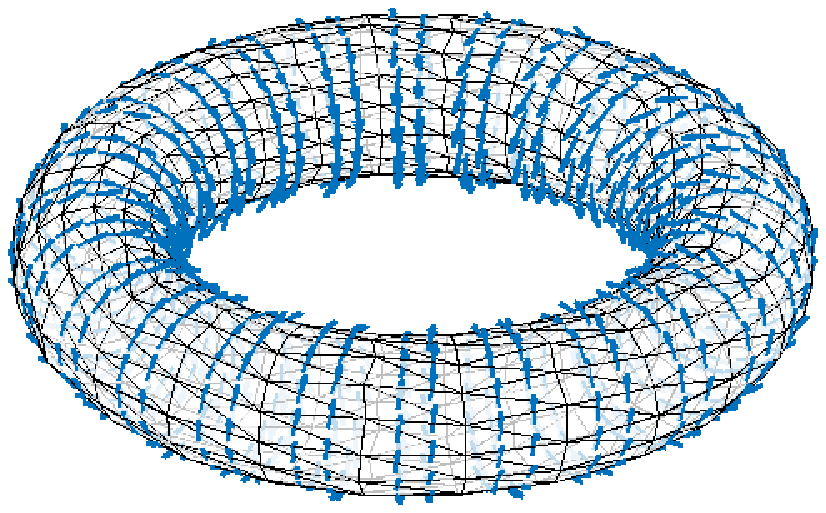}}
\caption{(a) Harmonic vector field I; (b) Harmonic vector field II.}
\label{harmonicfigure}
\end{figure}

\begin{table}[tbhp]
\caption{MINRES iterations for computing harmonic vector fields on $\mathbb{T}^2$.}
\centering
\begin{tabular}{|c|c|c|}
\hline
$N$ & $B_{P_1N_0}$ & $E_{P_1N_0}$ \\
\hline
192&44&5.388e-7\\
768&46&6.210e-7\\
3072&47&8.589e-7\\
12288&46&9.159e-7\\
49152&46&6.378e-7\\
196608&45&8.330e-7\\
786432&45&6.461e-7\\
\hline
\end{tabular}
\label{T2harmonic}
\end{table}

\section{Numerical experiments}\label{secNE}
This section is devoted to test the performance of the surface HX preconditioners for the lowest-order edge $N_0$ element and face $RT_0$ element on 2- and 3-dimensional hypersurfaces. In particular, we set $\mathcal{M}$ to be the 2-d torus
\begin{equation*}
    \mathbb{T}^2=\left\{x\in\mathbb{R}^3: \delta_{\mathbb{T}^2}(x):=\sqrt{\big((x_1^2+x_2^2)^\frac{1}{2}-R\big)^2+x_3^2}-r=0\right\}
\end{equation*}
with $R=2, r=0.5,$ 
and the unit 3-d sphere \begin{equation*}
\mathbb{S}^3=\big\{x\in\mathbb{R}^4: \delta_{\mathbb{S}^3}(x):=\big(x_1^2+x_2^2+x_3^2+x_4^2\big)^\frac{1}{2}-1=0\big\}.
\end{equation*}
We remark that the signed distance function $\delta=\delta_{\mathbb{T}^2}$ or $\delta_{\mathbb{S}^3}$ is used for refining meshes and is not required in the implementation of preconditioners.
In each table, let  $B_{N_0}$ (resp.~$B_{RT_0}$) denote the surface HX preconditioner $\widetilde{B}_h^{\nabla\times}$ (resp.~$\widetilde{B}_h^{\nabla\cdot}$) for \eqref{Vhd}. Let $\bm{u}_h({N_0})$ (resp.~$\bm{u}_h({RT_0})$)  denote the solution for \eqref{Vhd} based on the edge (resp.~face) element.  By $N$ we denote the number of grid elements. The iterative error of PCG method based on $B_{N_0}$ or $B_{RT_0}$ is denoted by $E_{N_0}$ or $E_{RT_0}$, respectively.

Here we explain the basis used in numerical implementation. Let $\{z_i\}_i$ be the set of grid vertices on $\mathcal{M}_h$, and $\lambda_i$ the continuous and piecewise linear hat function at $z_i$. We use 
 $$\big\{\lambda_i\od_h^-\lambda_j-\lambda_j\od_h^-\lambda_i\big\}_{z_i, z_j\text{ form an edge}}$$
 as a basis for $H_h(\nabla_h\times)$ (resp.~$H_h(\nabla_h\cdot)$ with $\dim\mathcal{M}_h=2$) where $\od^-_h=\nabla_h$ (resp.~$\od^-_h=\nabla_h^\perp$).
When $\dim\mathcal{M}_h=3$, a basis for $H_h(\nabla_h\cdot)$  is $$\big\{\lambda_i\nabla_h\lambda_j\wedge_h\nabla_h\lambda_k+\lambda_j\nabla_h\lambda_k\wedge_h\nabla_h\lambda_i+\lambda_k\nabla_h\lambda_i\wedge_h\nabla_h\lambda_j\big\}_{z_i, z_j, z_k\text{form a 2-d face}},$$ 
where $\wedge_h$ is the wedge product on $\mathcal{M}_h$ (see $\wedge$ on $\mathcal{M}$ in Remark \ref{remark3d}).

\subsection{Preconditioning on a 2-d torus}\label{exp1}
In this example, we consider the problem \eqref{Vd} with $f$ being the tangential component of the constant vector field $(1,1,1)$ on $\mathcal{M}=\mathbb{T}^2.$ The initial triangulation of $\mathbb{T}^2$ is $\mathbb{T}_0^2$ shown in Figure \ref{torus1}. The initial surface $\mathbb{T}_0^2$ is uniformly quad-refined (dividing each triangle into four subtriangles by connecting midpoints of all edges) to obtain a sequence of meshes on $\mathbb{T}_0^2$. Then the actual triangulated surface $\mathcal{M}_h$ is constructed by mapping grid vertices of meshes on $\mathbb{T}_0^2$ to $\mathbb{T}^2$ via $a$, see Figure \ref{torus2}.

To solve the SPD systems \eqref{Vhd}, we run the MATLAB function \textsf{pcg} with preconditioners $B_{N_0}$ and $B_{RT_0}$, where discrete Laplacians used in $B_{N_0}$ and $B_{RT_0}$ are solved by the operation `$\backslash$'. The stopping criterion for {\sf pcg} is $|\widetilde{B}_h^{\text{d}}r_k|/|b|\leq$1e-6, where $r_k$ is the PCG residual at the $k$-th step and $b$ is the right hand side of the algebraic system. 

It is observed from Tables \ref{T2N0RT0tau1} and \ref{T2N0RT0tau10000} that the HX preconditioners lead to uniformly convergent PCG method on $\mathbb{T}^2$. In addition, the number of PCG iterations is independent of the magnitude of $c\gg1.$

\subsection{Preconditioning on a 3-d sphere}

Let $p_1=(1, 0, 0, 0)$, $p_2=(0, 1, 0, 0)$, $p_3=(-1, 0, 0, 0)$, $p_4=(0, -1, 0, 0)$, 
$p_5=(0, 0, 1, 0)$, $p_6=(0, 0, -1, 0)$, $p_7=(0, 0, 0, 1)$, $p_8=(0, 0, 0, -1)$, and $[p_ip_jp_kp_\ell]$ denote the simplex in $\mathbb{R}^4$ with vertices $p_i, p_j, p_k, p_\ell$. The initial surface  $\mathbb{S}_0^3$ consists of the following 3-dimensional simplexes $[p_1p_2p_5p_7]$, $[p_3p_5p_2p_7]$, $[p_3p_4p_5p_7]$, $[p_1p_5p_4p_7]$, $[p_1p_6p_2p_7]$, $[p_3p_2p_6p_7]$, $[p_3p_6p_4p_7]$, $[p_1p_4p_6p_7]$, $[p_8p_1p_2p_5]$, $[p_8p_3p_5p_2]$, \\ $[p_8p_3p_4p_5]$, $[p_8p_1p_5p_4]$, $[p_8p_1p_6p_2]$, $[p_8p_3p_2p_6]$, $[p_8p_3p_6p_4]$, $[p_8p_1p_4p_6]$ in $\mathbb{R}^4$.

The initial surface $\mathbb{S}_0^3$ is uniformly refined by the red-refinement algorithm in \cite{Bey2000} to generate a grid sequence on $\mathbb{S}_0^3$. We use $a$ to map the grid vertices of refinement of $\mathbb{S}_0^3$  to construct the true triangluation $\mathcal{M}_h$. Let $\varphi=x_1+x_2+x_3+x_4$. We use $\bm{u}=\nabla_{\mathbb{S}^3}\varphi$ as the exact solution of \eqref{Vd} with $\text{d}=\nabla\times$ and $\text{d}=\nabla\cdot$. In Table \ref{S3convergence}, we record the discretization error of  \eqref{Vhd} with $\text{d}_h=\nabla_h\times$ and $\text{d}_h=\nabla_h\cdot$, which clearly exhibits first-order convergence.

The discrete problem in \eqref{Vhd} is solved by the MATLAB function \textsf{pcg} with the same setup in Subsection \ref{exp1}.
We use the classical AMG V-cycle in the iFEM package \cite{iFEM} as the discrete Poisson solver in $B_{N_0}$ and $B_{RT_0}$. 

It can be observed from Tables \ref{S3N0RT0tau1} and \ref{S3N0RT0tau10000} that the HX-preconditioned PCG method uniformly converges on $\mathbb{S}^3$. Moreover, the convergence rate of MINRES iteration is robust with respect to the large parameter $c$. 

\subsection{Harmonic vector fields on a 2-d torus}

In the third experiment, we compute the space of harmonic vector fields $\mathcal{H}_h(\nabla_h\times)$ on a triangulated torus. The torus, initial mesh, and mesh refinement are the same as Subsection \ref{exp1}. The HX-preconditioned MINRES method in Section \ref{secharmonic} is applied to solve the kernel of the system \eqref{discreteharmonic} with $\text{d}_h^-=\nabla_h$, $\text{d}_h=\nabla_h\times$.  The right hand side $b$ in \eqref{minres} is randomly produced by the MATLAB function \textsf{rand}. The preconditioner $\mathcal{B}_h^{\text{HX}}=B_{P_1N_0}$ is given in \eqref{blockHX} with  $\text{d}=\nabla\times$, where  all discrete Laplacians $A_h^\nabla$ used in  $B_{P_1N_0}$ are inverted by `$\backslash$'. The stopping criterion is 
\[
\frac{|\widetilde{\mathcal{A}}_h\widetilde{\mathcal{B}}^{\text{HX}}_h(b-\widetilde{\mathcal{A}}_hx_k)|}{|b|}\leq10^{-6}.
\]
In Table \ref{T2harmonic}, $E_{P_1N_0}$ is the iterative error of MINRES preconditioned by $B_{P_1N_0}$.

On discrete tori in this experiment, the dimension of $\mathcal{H}_h(\nabla_h\times)$ is 2. We use MINRES to solve \eqref{minres} twice with two different randomly generated $b$. It is shown in Table \ref{T2harmonic} that the number of MINRES iterations is uniformly bounded.  The Gram--Schmidt process is applied to the two output vector fields from MINRES with respect to the $H_h(\nabla_h\times)$-norm. The resulting two orthonormal harmonic vector fields are shown in Figure \ref{harmonicfigure}. 


\end{document}